\let\eps\varepsilon  
\newcommand{\N}{{\mathbb N}}  
\newcommand{\R}{{\mathbb R}} 
\newcommand{\mub}{\vec\mu}
\newcommand{\bn}{\boldsymbol{n}}
\newcommand{\sspan}{\operatorname{span}}
\newcommand{\Ib}{{\mathbb I}}  
\newcommand{\dv}{\operatorname{div}}  
\newcommand{\curl}{\operatorname{curl}}
\newcommand{\Sn}{S^{(n)}}
\newcommand{\rhov}{\vec{\rho}}
\newcommand{\rhon}{\rho^{(n)}}
\newcommand{\rhonv}{\vec{\rho}^{(n)}}
\newcommand{\pn}{p^{(n)}}
\newcommand{\Tn}{T^{(n)}}
\newcommand{\vn}{\mathbf{v}^{(n)}}
\newcommand{\Jn}{\mathbf{J}^{(n)}}
\newcommand{\sgn}{\sigma^{(n)}}
\newcommand{\veb}[1]{\mathbf{#1}}
\newcommand{\tf}{\mathcal{T}}
\newcommand{\Qt}{Q_\mathcal{T}}
\newcommand{\uml}{\"}
\newcommand{\chf}[1]{{\raisebox{3pt}{\Large $\chi$}}_{#1}}
\let\pa\partial
\newtheorem{theorem}{Theorem}   
\newtheorem{lemma}[theorem]{Lemma}   
\newtheorem{proposition}[theorem]{Proposition}   
\newtheorem{remark}[theorem]{Remark}   
\newtheorem{definition}{Definition}
\begin{document}
\title[Richards flow in porous media with cross-diffusion]{Richards flow in porous media with cross diffusion}

\author[E. S. Daus]{Esther S. Daus}
\address{Energie-Control Austria f\"ur die Regulierung der Elektrizit\"ats-und Erdgaswirtschaft (E-Control), Rudolfsplatz 13a, 1010 Vienna, Austria
}
\email{esther.daus@gmail.com}

\author[J.-P. Mili\v si\'c]{Josipa-Pina Mili\v si\'c}
\address{University of Zagreb, Faculty of Electrical Engineering and Computing, Unska 3, 10000 Zagreb, Croatia}
\email{pina.milisic@fer.unizg.hr}

\author[N. Zamponi]{Nicola Zamponi}
\address{Universit\"{a}t Augsburg, Institut f\"ur Mathematik, Universit\"{a}tsstra\ss e 12a, 86159 Augsburg, Germany}
\email{nicola.zamponi@uni-a.de}
\date{\today}

\date{\today}
\thanks{J.-P. Mili\v si\'c acknowledges the support by Croatian Science Foundation under the project IP-2025-02-5004 and the Republic of Croatia’s MSEY in course of Multilateral scientific and technological cooperation in Danube region under the project MultiHeFlo. N. Zamponi thanks the University of Augsburg for financial support via the research program “Forschungspotentiale besser nutzen!”. The results presented in this paper are those of the authors alone and do not represent the official views, policies, or positions of E-Control where the author E.S. Daus is employed. The authors bear full responsibility for the content of this work.}
   
\begin{abstract} 
We study a non-isothermal simplified two-phase air–water flow in porous media, where the water phase is modelled as a mixture of $N$ chemical components. The governing system consists of mass conservation equations for each component, an energy balance equation, and a capillary pressure relation. The model is thermodynamically consistent and incorporates cross-diffusion effects arising from multicomponent interactions.
Our main result establishes the sequential stability of weak variational entropy solutions. The analysis relies on a priori estimates derived from the entropy balance and the total energy balance, together with a dynamic capillary pressure law. Compactness arguments are carried out using the Div–Curl lemma, which allows us to pass to the limit in the nonlinear terms.

\end{abstract}

%\paragraph{Keywords:}  
\keywords{cross diffusion, nonisothermal Richards flow, degenerate nonlinear parabolic equation, weak variational entropy solutions, sequential stability of solutions}  

%\paragraph{AMS classification:}  
\subjclass[2000]{35K65, 35K70, 35Q35, 35K55, 76S05}  

\maketitle

%----------------------------------------------------------%

\section{Introduction} \label{sec.introl}

The modeling of multiphase flow through porous media plays a central role in many engineering applications, including geothermal systems, oil reservoir engineering, groundwater hydrology, and thermal energy storage. The specific motivation for our work arises from problems of groundwater contamination, where pollutants such as nitrates, arsenic, bacteria, and industrial chemicals infiltrate the upper layers of the soil.
In unsaturated soils, this situation can be described as a two-phase flow of immiscible fluids - air and water - where the air phase is typically assumed to remain at constant pressure. The resulting dynamics may be modeled by a non-isothermal Richards-type equation, in which the water phase is treated as a mixture of $N$ chemical components subject to cross-diffusion effects.

The model is derived in a thermodynamically consistent framework, starting from the Helmholtz free energy, and is governed by coupled equations for mass and energy balances together with appropriate constitutive relations. Mathematically, this leads to a quasilinear parabolic system characterized by several degeneracies but endowed with an entropy structure. The main analytical challenges stem from the nonlinear and degenerate character of the equations, the strong coupling between different components, and the presence of cross-diffusion terms.
   
The mathematical analysis of multiphase flows in porous media has its origins in applications to geological sciences and reservoir simulations. One of the first results on the existence of weak solutions for a simplified system describing non-isothermal two-phase flow in porous media was obtained by Bocharov and Monakhov \cite{BoMo88.1, BoMo88.2}. Later, in \cite{LiSun10}, the authors established the global existence of weak solutions for a non-isothermal, one-dimensional multicomponent heat–air–vapor transport model in porous textile materials. Existence results for Richards’ model, arising in the study of heat and moisture flow through partially saturated porous media, were obtained in \cite{BePaz17}. More recently, the existence of weak solutions for a non-isothermal, immiscible, compressible two-phase flow in heterogeneous porous media was proved in \cite{AJPP25}. 

The mathematical analysis of cross-diffusion systems has received considerable attention over the last two decades, particularly through the development of entropy methods (see, e.g., \cite{BSW12, Jue16}). In recent years, there has been growing interest in modeling cross-diffusion effects in thermodynamically consistent ways, especially for fluid mixtures (see, e.g., \cite{BH15, BPZ17, JMZ18, DDGG20, FHKM20, HJ21}). Seminal contributions on the thermodynamic theory of fluid mixtures can be found in \cite{BoDre15, Gio99}.

In essence, cross-diffusion occurs when the concentration gradient of one species induces a flux of another species. Mathematically, this leads to strongly coupled parabolic systems of reaction–diffusion type, in which the diffusion matrix is generally non-diagonal, non-symmetric, and not positive definite. Classical maximum principle arguments are typically unavailable for such systems. However, the presence of a formal  entropy structure allows for rigorous analysis. A central technique in entropy methods is the introduction of entropy variables, which often correspond physically to the chemical potentials of the system, providing both mathematical tractability and physical interpretability.

 Building on these developments, the present work focuses on the weak sequential stability of weak variational entropy solutions for a multicomponent  immiscible, compressible two-phase unsaturated flow in porous media with cross-diffusion effects, extending previous analyses \cite{DMZ20} to a fully non-isothermal and thermodynamically consistent setting.  
 To the best of our knowledge, this remains a relatively new topic: cross-diffusion phenomena in porous media have received only limited attention in the mathematical literature (see \cite{OstrRho20, LiSun10}).
Our proof strategy combines entropy methods (as in \cite{Jue16}) with techniques from fluid dynamics (cf. \cite{Feir04}). Similar approaches have been successfully applied in the work of Buli\v cek, J\"ungel, Pokorn\'y, and Zamponi \cite{BJPZ20}, who demonstrated the existence of weak solutions for a thermodynamically consistent, steady compressible Navier–Stokes–Fourier system describing chemically reacting mixtures.
 These results contribute to the rigorous mathematical understanding of multicomponent two-phase flows, providing a framework that is relevant both for the analysis of porous media processes and for the development of reliable numerical schemes in applications ranging from fluid dynamics to environmental modeling.
  
The paper is organized as follows. In Section\ref{sec.phys}, we introduce the key physical quantities, present the governing equations of the model, and state the constitutive relations together with the main mathematical assumptions. Section\ref{sec.sol} is devoted to the notion of weak solutions, referred to as weak variational entropy solutions, and contains the main result on weak sequential stability for smooth solutions. In Section\ref{sec.apriori}, we derive the necessary a priori estimates, while Section\ref{sec.pass} addresses the passage to the limit. Finally, an appendix collects several auxiliary technical results used throughout the analysis.
 %---------------------------------------------------------------------------------------------------------------------------%
\subsection{The physical quantities and main assumptions}\label{sec.phys}
We consider a nonisothermal Richards flow process with cross-diffusion effects in a porous reservoir $\Omega \subset \R^3$, which is a bounded Lipshitz domain, with a nondeformable skeleton. The time interval of interest is $(0,\mathcal{T})$ and $\mathcal{Q} = \Omega \times (0,\mathcal{T})$.
The indexes $w,n,s$ correspond to the wetting (water) phase, the non-wetting (air) phase, and the skeleton. 
The Richards model we are considering is a simplified two-phase flow in a porous medium in which the air pressure is assumed to be constant. It is applicable in the upper layers of the soil with a well connected pore network. Since air is much more mobile than water and is in constant contact with the atmosphere, it can react almost immediately to any change in pressure, resulting in an almost constant air pressure. Since the air phase is assumed to be homogeneous and there is no mass transfer between the phases, it can be eliminated from the system.\\
\noindent{\sf The porosity function:} 
By $\Phi$ we denote the porosity of the domain $\Omega$, i.e. the volume of the pore space in unit volume of a continuum.
We assume that the porosity $\Phi : \Omega\to [0,1]$ is a Lebesgue-measurable function such that
\begin{equation}
\label{Phi.pos}
\mbox{ess}\inf_{x \in \Omega}\Phi(x) > 0,\qquad 
\mbox{ess}\sup_{x \in \Omega}\Phi(x) < 1.
                 \end{equation}
{\sf The temperature:} $T = T(x,t)$ stands for the temperature in $\Omega$. We assume that the medium is locally in a heat equilibrium state, i.e. $T_w = T_n = T_s = T$.\\
{\sf The phase pressures:} Standardly,  $p_w $ denotes the water pressure while $p_n = p_{at}$ represents the constant air-pressure. Throughout this article, for the water pressure we take  
\begin{equation}
p = p(x,t) = p_w - p_{at}.
\label{press_w}
\end{equation}
{\sf The water saturation.} By $S_w = S(x,t)$ we denote the water saturation in $\Omega$. 
Water saturation refers to the fraction of the pore space in a porous medium that is filled with water. \\
{\sf The capillary pressure:}  The constitutive equation relating the capillary pressure $P_c$ to the water saturation $S$ is classically given as an algebraic relationship between $P_c$ and $S$, i.e. $P_c(S) = p_n - p_w$. A detailed discussion of this relationship can be found e.g. in \cite{Bear68}. The relationship between $P_c$ and $S$ has been generalized on the basis of thermodynamical arguments by Gray and Hassanizadeh \cite{HassGrey93} (see also  \cite{CaoPop16}).

 In the case of Richards flow, using relation \eqref{press_w}, the dynamic capillary pressure relation can be written in the form  
 \begin{equation}
 p = -P_c(S) - \partial_t f(S),   
 \label{DCP}
 \end{equation}
 where  $f$ denotes the dynamic capillary pressure function.\\
{\sf The water mass density:} $\rho_w = \rho(T,p)$. We assume that the water phase is a mixture of $N$ different components. 
We denote the mass concentration of the $i-$th water component by $\rho_i$, for $i=1,\ldots,N$. More precisely $\rho_i$ is the mass of the $i$-th component in the volume of the water mixture. It holds that $\rho = \sum_{i=1}^N \rho_i$. Moreover, let be $\vec\rho = (\rho_1,\ldots,\rho_N)$.\\
 {\sf The energy densities of the water-phase, the skeleton and the interface between two phases.} By $E_w$ we denote the energy density of the water phase, neglecting the viscous dissipation and kinetic energy.
 We assume that the skeleton energy density $E_s$ depends on the temperature only, i.e. $E_s = E_s(T)$. 
   Following \cite{JKKP19} the total energy density of the fluid $E_{f}$ is given by
\begin{align}
\label{def.Ef}
E_{f} = E_w S  + E_{int}(S).
\end{align}
Here, the total water energy $E_f$ is given by the water internal energy multiplied by the water saturation $S$, plus the interfacial energy contribution; scaling by $S$ reflects the fraction of pore volume occupied by water, while the interface term $E_{int} = E_{int}(S)$ accounts for energy at phase boundaries \cite{JKKP19}.

Following \cite{Coussy04, JKKP19} the capillary pressure function is related to $E_{int}$ by
      \[  P_c(S) = - \frac{\partial E_{int}}{\partial S}.\]
  It follows that, up to a constant,  one has the following relation
  \begin{align}
\label{def.Eint}
E_{int}(S) &= \int_{S}^1 P_c(\xi)d\xi.
\end{align}
The total energy density $E_{tot}$ of the porous medium is given by
\[ E_{tot} =   \Phi E_f + (1-\Phi) E_s.     \]
{\sf The relative permeability function:} By $k_{r,w} = k_r(S)$ we denote the relative permeability of the water-phase in $\Omega$. We assume that $k_r$ is a continuous function that satisfies: $(i)$ $0 \leq k_r \leq 1$ on $\R$; $(ii)$ $k_r(S) = 0$ for $S \leq 0$ and $k_r(S) = 1$ for $S \geq 1$.\\
{\sf The mobility functions:}  The water mobility function $\lambda_w = \lambda(S,T)$ is defined by
        \begin{equation}
	\label{mobility}
\lambda(S,T) = \frac{k_{r}(S)}{\nu(T)},
\end{equation}
where $\nu(T)$ is the viscosity of the water phase.\\
%

%------------------------------------------------------------------------------------------------------------------------%
\subsection{Model equations}\label{subsec.model}

  Let $\veb{v}$ be the water velocity and $\veb{v}_i$ be the velocity of the $i-$th water component, for $i=1,\ldots,N$.
A water mixture (barycentric) velocity $\veb{v}$ is defined as 
\[ \rho \veb{v} = \sum_{i=1}^N \rho_i \veb{v}_i. \] 
Following \cite{SDQ11}, each component of the water mixture satisfies the mass conservation law, i.e. 
\begin{equation}\label{eq.comp}
\Phi\frac{\partial}{\partial t} (S \rho_i) + \dv(  \rho_i \veb{v}_i ) = {r_i},\quad i=1,\ldots,N,
\end{equation}
where $r_1,\ldots,r_N$ are reaction terms modeling e.g.~chemical reactions between the components in the water mixture. We assume that the total mass conservation holds, i.e. 
\begin{align}
\sum_{i=1}^N r_i = 0.
\label{MassCons}
\end{align}
By summing equations \eqref{eq.comp} we get the total mass conservation of the water-phase:
\begin{equation*}
\Phi\frac{\partial}{\partial t} (S \rho) + \dv(  \rho\veb{v} ) = 0.
\end{equation*}
 Following \cite{Bear68}, the generalized Darcy law gives
\begin{align}
	\label{vow}
\veb{v} = -K(x)\lambda(S,T)\nabla p,
\end{align}
where $\lambda$ is the mobility function, and the absolute permeability of the domain $\Omega$, $K(x)$ is bounded, strictly positive function.
The transport of each water-component is divided into convective part ($\rho_i \veb{v}$)  and diffusive part
($\veb{J}_i$) by dividing the component flux    
\begin{equation*}
  \rho_i \veb{v}_i = \rho_i \veb{v} + \veb{J}_i,
\end{equation*}
where the diffusive flux is defined as $\veb{J}_i=\rho_i (\veb{v}_i - \veb{v})$. Obviously, 
$  \sum_{i=1}^N \veb{J}_i = 0.  $

Provided that the external forces are neglected, in the following we give 
expressions for the diffusion and the thermal fluxes, \cite{GrootMazur}. \\ 
{\sf  The diffusion flux  of the $i$th species} is denoted by  $\veb{J}_i$ and is given by
        \begin{align}
  \veb{J}_i = L_{i0} \nabla \Big(\frac{1}{T}\Big) - \sum_{j=1}^{N} L_{ij} \nabla \Big(\frac{\mu_j}{T}\Big), \qquad i=1,\ldots,N,  \label{flux.J}
\end{align}
where $L_{ij} = L_{ij}(\vec{\rho},T)$ are the diffusion coefficients (mobilities) which form the mobility matrix $(L_{ij})_{i,j=1,\ldots,N}$ and
$\mu_1,\ldots,\mu_N$ represent the chemical potentials of the components of the water-phase.
Furthermore, 
$L_{i0} = L_{i0}(\vec\rho,T)$ for $i=1,\ldots,N$.\\
{\sf The heat flux} is denoted by $\veb{q}$ and consists of Fourier's law and the molecular diffusion term
         \begin{align}
  \veb{q}  = -\kappa(T) \nabla T + \sum_{j=1}^{N} L_{0{j}} \nabla \Big( \frac{\mu_j}{T} \Big),
  \label{flux.q}
\end{align}
where $\kappa(T) > 0$ is the heat conductivity given by
\begin{equation}
 \kappa(T) = L_{00} T^{-2}.  \label{heat_cond}
\end{equation}  
More details of the contemporary approach of deriving appropriate 
representations of the fluxes can be found in \cite{BoDre15, BD20} and the references 
given there.

Following \cite{Kav95}, for a control volume in porous medium the energy conservation reads
\[  \frac{\partial}{\partial t} E_{tot}  +  \dv( E_{flux}  )   = 0, \]
where the energy flux $E_{flux}$ consists of the convective part $(E_w + p) \veb{v} $ and the diffusive part $\veb{q}$ related to the heat flux.
In this way one obtains the energy equation
\begin{align}
\frac{\partial}{\partial t}\big( \Phi (S E_w + E_{int}(S)) + (1-\Phi) E_s \big) 
+ \dv \big( (E_w + p) \veb{v}  + \veb{q} \big)  = 0.  
\label{eq.energy}
\end{align}

\begin{remark}
 We note that the energy conservation equation in a porous medium does not include the term of internal energy production. In porous media the friction between the fluid and the porous matrix dominates internal friction within the fluid which is therefore neglected. In contrast, compression effects are not completely neglected, and therefore we have the specific enthalpy $E_w + p$ under the divergence, instead of a simple specific internal energy. Production of internal energy by chemical reactions is also neglected.
\end{remark}

 Complete model consists of the mass conservation equation \eqref{eq.comp} for each water-component phase $i$, the energy conservation equation \eqref{eq.energy} and the capillary pressure law \eqref{DCP}:
\begin{align}
\Phi  \frac{\partial}{\partial t} (S \rho_i) + \dv(  \rho_i\veb{v} + \veb{J}_i ) &= {r_i},\quad i=1,\ldots,N,  \label{cons_w} \\
  \frac{\partial}{\partial t}\big (
\Phi (S E_w + E_{int}(S)) + (1-\Phi) E_s \big) 
+ \dv \big( (E_w + p) \veb{v}  + \veb{q} \big) & = 0,   \label{cons_e} \\
\partial_t f(S) + P_c(S) + p &= 0. \label{PC.Sw}
\end{align}

We assume the following initial conditions:
\begin{align}
	\label{ic.1}
	&\rho_i(\cdot,0) = \rho_{i,in},\quad  i=1,\ldots,N, \quad
	S(\cdot,0) = S_{in},\quad
	T(\cdot,0) = T_{in}\quad\mbox{in }\Omega,
\end{align}
where the initial data are Lebesgue measurable functions such that 
 equations \eqref{cons_w}-\eqref{PC.Sw} are solved in a bounded domain $\Omega \subset \R^3$ and are supplemented with complete slip boundary conditions for the velocity, and Robin boundary conditions for the diffusion fluxes and the heat flux on $\partial \Omega \times (0,\infty)$, $i=1,\ldots,N$:
\begin{align}\label{bc}
&\veb v \cdot \bn = 0,\quad
 \veb J_i\cdot\bn = \sum_{k=1}^Nb_{ik} 
 \left( \frac{\mu_k}{T} - \frac{\mu_{0,k}}{T_0} \right),\quad 
\veb q \cdot \bn = \alpha (T-T_0),
\end{align}
where $\alpha>0$, $T_0>0$, $\mu_{0,1},\ldots,\mu_{0,N}\in\R$ 
are scalar constants, and $\bn$ is a unit outer normal on $\Omega$.
We note that
$(b_{ij})_{i,j=1,\ldots N}\in\R^{N\times N}$ is a constant, symmetric, positive semidefinite matrix such that
\begin{align}
\label{hp.b}
\sum_{i=1}^N b_{ij} = 0\qquad j=1,\ldots,N.
\end{align}
%-------------------------------------------------------------------------------------------------------------------------%
\subsection{Constitutive relations} \label{subsec.constrel}

 We assume that the thermodynamic quantities appearing in \eqref{cons_w}--\eqref{PC.Sw}, i.e. the chemical potentials $\mu_i$, the pressure $p$ and the thermal energy densities of the water-phase $E_w = (\rho e)_w$ and the skeleton $E_s = (\rho e)_s$,
are induced by the Helmholtz free energy density $\rho\Psi$ in a thermodynamically consistent way. We take that the free energy density $\rho\Psi$ decomposes into a sum of water- and skeleton-related contributions
\begin{align*}
\rho \Psi(\vec\rho, T)  = (\rho \Psi)_w(\vec\rho, T) + (\rho\Psi)_s(T). %\label{Helm}
\end{align*}
The system is complemented by a set of constitutive relations \cite{Gio99, BoDre15}, which read
\begin{align}
(\rho \Psi)_{\mathcal{I}} - T \frac{\partial (\rho \Psi)_{\mathcal{I}}}{\partial T}  = (\rho e)_{\mathcal{I}}, & \quad  \mathcal{I} = \{ w, s\} \label{CR.1}\\
 \frac{\partial(\rho \Psi)_w}{\partial \rho_i} &=  \mu_i \label{CR.2} \\
 \frac{\partial(\rho \Psi)_w}{\partial T}  =  -(\rho \eta)_w,  & \quad   \frac{\partial(\rho \Psi)_s}{\partial T} =  -(\rho \eta)_s \label{CR.3},
 \end{align}
where by $(\rho \eta)_{\alpha}$, $\alpha \in \{w,s\}$ we denoted the entropy density, along with standard Gibbs-Duhem relation \begin{equation}
(\rho \Psi)_w + p = \sum_{i=1}^{N} \rho_i \mu_i.
\label{Gibbs-Duhem}
\end{equation}
for determining the pressure $p$ from the free energy density $(\rho \Psi)_w$.

 In this work we specify the functions $(\rho \Psi)_w, (\rho\Psi)_s$ as follows:
\begin{align}\label{Helm.w}
(\rho\Psi)_w(\vec\rho,T) &= T\sum_{i=1}^N\rho_i\log\rho_i + \rho^\gamma - c_w\rho T\log T  + p_{at},\\
\label{Helm.s}
(\rho\Psi)_s(T) &= T -c_s T\log T.
\end{align}
Here $c_w$, $c_s>0$ are (scaled) heat capacities (of water and skeleton, respectively), $p_{at}>0$ is the atmospheric pressure
shift of the free energy. We point out that we take
\begin{equation}
\label{gamma}
\gamma > 2.
\end{equation}

\begin{remark}
The choice for the Helmholtz free energy \eqref{Helm.w} is explained in \cite[Remark 1.2]{BJPZ20}.
We note that the term  $T\sum_{i=1}^N\rho_i\log\rho_i$ represents the entropy of mixing for an ideal mixture of $N$ components. Next, the term $\rho T\log T$ comes from the thermal entropy contribution in compressible non-isothermal fluids, see for example \cite{DDGG20}. Finally, the term $\rho^\gamma$ is needed for mathematical analysis in order to obtain an estimate for the total mass density. A certain physical justification for this term can be found in \cite{FeirNov09}. The Helmholtz free energy $(\rho \Psi)_w$, in essentially the form like \eqref{Helm.w} can be found for example in \cite{DrGisKr14} for use in non-isothermal, thermodynamically consistent phase-field models. 
Furthermore, the choice \eqref{Helm.s} for the skeleton free energy 
is mathematically constructed to ensure thermodynamic consistency with the energy of the skeleton given by \eqref{Skel.Ener}.
 \end{remark}
  
As a consequence of \eqref{CR.1}-\eqref{CR.3} the thermodynamic quantities are given by the following relations.
To maintain simplicity, we will use the notation $(\rho e)$ and $(\rho \eta)$ instead of $(\rho e)_w$ and $(\rho \eta)_w$ in the following.
\smallskip\\
{\sf Chemical potentials:}
 \begin{align}
 \mu_i (\rho,T) =  \gamma\rho^{\gamma-1} + T(\log\rho_i + 1)  - c_w T\log T,
 \quad i=1,\ldots,N. \label{chp.1}
\end{align}
{\sf Pressure:}
\begin{align}
  p(\rho,T)  =  (\gamma-1)\rho^\gamma  + T\rho - p_{at}. \label{thp.1}
\end{align}
{\sf Water energy density: } 
\begin{align}
 E_w(\rho,T)  =  (\rho e)(\rho,T)  =  \rho^\gamma + c_wT \rho + p_{at}.
 \label{eng.1}
\end{align}
{\sf Water entropy density:}
\begin{align}\label{entropy}
(\rho \eta)(\rho,T) =  
-\sum_{i=1}^N\rho_i\log\rho_i + c_w\rho(\log T + 1).
\end{align}
{\sf Skeleton entropy and energy:}
\begin{align}
(\rho \eta)_s(T) &= 
c_s - 1 + c_s\log T,
\label{Skel.Entr}\\
E_s(T) &
=  (\rho e)_s (T) = c_s T. \label{Skel.Ener}
\end{align}
\begin{remark}\label{Est_Add.0-basic}
We point out that the following relations hold
\begin{align}
	\label{GibbsDuhem_Old}
	T d(\rho\eta) = d(\rho e) - \sum_{i=1}^N\mu_i d\rho_i ,
\end{align}
where the differential $d$ stands here for $\partial_t$ or $\partial_{x_i}$, $i=1,2,3$.
Note that  \eqref{GibbsDuhem_Old} gives:
\begin{equation}
\label{GD-1}
 \frac{\pa(\rho\eta)}{\pa\rho_i} =  -\frac{\mu_i}{T}, \quad
  \frac{\pa(\rho\eta)}{\pa (\rho e)} = \frac{1}{T}. 
\end{equation}
We also point out relation that connects all thermodynamic quantities
\begin{align}
		\label{rhoeta}
	p + (\rho e) - \sum_{i=1}^N\rho_i\mu_i = T(\rho\eta).
\end{align}
\end{remark}
\begin{remark}\label{Est_Add.1-basic}
	The following estimates hold:
		\begin{align*}
			(\rho e) &\leq C_1(p+ 2p_{at})\\
			p &\leq C_2 (\rho e), \quad p \geq -p_{at}\\
			E_{f} - S(\rho\eta)  &\geq S \rho^\gamma  - C_3,\\
			E_{s} - (\rho\eta)_{s}  &\geq C_4(T + |\log T|) - C_5,
		\end{align*}
		where $C_i$, $i=1,\ldots,5$ are  fixed constants.
	\end{remark}
%--------------------------------------------------------------------------------------------------------------------%
\subsection{Hypotheses} \label{subsec.hypoth}
In this subsection we collect the main mathematical 
hypotheses which we imposed throughout this article, 
and introduce six important parameters:
$\gamma, \beta, q$, $a$, $\alpha_r$ and $k_p$.

{\bf H0.} {\sf The porous medium properties:}
The porosity function $\Phi$ satisfy (1) and absolute permeability function $K(x)$ 
satisfy $0 < K_{min} \leq K \leq K_{max} < +\infty$.

{\bf H1.} {\sf Initial data:}
\begin{align}
	\label{hp.data}
\rho_{i,in}>0,\quad  {0 < S_{in} < 1}, 
\quad T_{in}>0,\quad\mbox{a.e.~in }\Omega,\\
\label{hp.data.2}
{f(S_{in})}, \;  |\nabla f(S_{in})|^q, \;  S_{in}\rho_{in}, \;
\Phi E_{f,in} + (1-\Phi)E_{s,in},\;
\Phi S_{in} (\rho\eta)_{in} + (1-\Phi)(\rho\eta)_{s,in}\in L^1(\Omega),
\end{align}
where $q$ is defined in {\bf H3}.

{\bf H2.} {\sf Phase viscosity, relative permeability.} Function $\mu : \R_+\to\R_+$ is a continuous, uniformly positive and bounded, while  $k_r\in C^0([0,1])$ is an increasing, nonnegative function such that
\begin{equation}
\label{hp.kr}
\exists \alpha_r > \frac{2}{\gamma},\;
\exists k_r^*>0 \textrm{ such that }
\lim_{s\to 0} s^{-\alpha_r}k_r(s) = k_r^* ,
\end{equation}
where $\gamma$ is given in \eqref{Helm.w} and \eqref{gamma}.

{\bf H3.} {\sf Capillary pressure.} Functions $P_c , f : (0,1)\to\R$ are $C^1$ strictly decreasing, and there exists $q \in [\gamma/(\gamma-1),2 )$ and $c_f > 0$ such that
\begin{align}
\label{hp.Pcf}
\inf_{0<s<1/2}\frac{|P_c'(s)| \, k_r(s)^{\frac{q}{2(q-1)}}}{
	|f'(s)|} \geq c_{f},\\
\label{hp.krf}
\lim_{s\to 1}f(s) = -\infty,\quad
\exists c_f'>0 \textrm{ such that }
\left|\frac{d}{ds}\sqrt{k_r(s)}\right|\leq c_f'
\left|f'(s)\right|,\quad 0<s<1,\\
\label{hp.Pcf2}
f(0)>0,\quad P_c(S)>0,\quad
\inf_{s\in (0,s_0)}\frac{P_c(s)}{f(s)} > \frac{p_{at}}{f(0)},\;
\textrm{ where }\; f(s_0) = 0,\\
\label{hp.Pc.bound}
\exists c_{p} > 0,~~ k_p> 0 \textrm{ such that }
\lim_{s\to 0^+} s^{k_p} P_c(s) = c_p.
\end{align}
We note that conditions \eqref{hp.kr}, \eqref{hp.Pcf}, \eqref{hp.krf} and \eqref{hp.Pc.bound} imply a lower bound on $k_p$, equal to $\frac{\alpha_r}{2(q-1)}$. An example of functions $f$, $P_c$ and $k_r$ satisfying {\bf H2}  and {\bf H3}
can be given by :
\begin{equation*}
f(s) = 2- \frac{1}{(1-s)^a},\quad P_c(s) = \frac{1}{s^{k_p}},\quad 
k_r(s) = s^{\alpha_r},
\end{equation*}
where $\alpha_r > 2$, $p_{at} =1$, $a>0$, and 
$k_p \geq  \frac{q\alpha_r}{2(q-1)} -1$. We note also that the function $f(s)$
can be chosen up to an additive constant but condition \eqref{hp.Pcf2} 
demands that one can lift the curve $f$ such that $f(0) >0$ and \eqref{hp.Pcf2}
is still satisfied. This condition relates the shapes of the two curves to $p_{at}$. 

{\bf H4.} {\sf Heat conductivity.} Function $\kappa(T)$ satisfies the assumption that there exists an exponent $\beta > 0$ such that
\begin{align}
	\label{kappa}
\exists \kappa_1, \kappa_2>0 \textrm{ such that }
\kappa_1(1+T^\beta)\leq\kappa(T)\leq \kappa_2(1+T^\beta),
\end{align}
where 
\begin{align}
\label{hp.beta}
\beta \geq 
\max \left\lbrace  \frac{q}{2-q}, \frac{\gamma}{\gamma-2}, 2 \right\rbrace ,
\quad \beta \neq \frac{\gamma}{\gamma-2}, \quad \beta \neq 2.
\end{align}

{\bf H5.} {\sf Mobilities.} 
The Onsager-Casimir 
reciprocity relations imply the symmetry of the mobilities $L_{ij}=L_{ji}$, $L_{i0}=L_{0i}$ for $i,j=1,\ldots,N$,
and the second law of thermodynamics requires 
the positive semidefinitness of the matrix 
$(L_{ij}) \in \R^{N\times N}$ as well as 
$L_{00}\geq 0$. 
Furthermore, for all $i,j=1,\ldots,N$ the coefficients 
$L_{ij}, L_{i0}\in C^0(\R^N_+\times\R_+)$ satisfy
\begin{align}
\label{hp.Ltilde}
| L_{ij}(\vec\rho,T)| + \frac{1}{T}|L_{i0}(\vec\rho,T)|
\leq C,
\end{align}
for all $(\vec\rho,T) \in \R_+^N \times \R_+$. 
Moreover, we assume that there exists $C, C'>0$ such that
\begin{align}
		\label{Ass.L}
		C |\Pi \vec{u} |^2\leq 
	\sum_{i,j=1}^N L_{ij}(\vec\rho,T) u_i u_j \leq 
	C' |\Pi \vec{u} |^2 , \quad 
	\forall \vec{u}\in\R^N, \; \vec\rho\in\R_+^N, \; T>0,
\end{align}  
where $\Pi = \Ib - \veb{1} \otimes \veb{1}/N$, is the orthogonal projector on $\sspan \{\veb{1}\}^\top$.
Thus,
\begin{align}
	\label{Pi}
(\Pi \vec{u})_i = u_i - \frac{1}{N} \sum_{j=1}^N u_j,\quad i=1,\ldots,N.
\end{align}
We also point out that, being $(L_{ij})_{i,j=1,\ldots,N}$ symmetric and positive semidefinite, \eqref{Ass.L} implies that 
$\sum_{i=1}^N L_{ij}=0$ for $j=1,\ldots,N$. 
Moreover, we assume that
\begin{equation} \label{Li0.Sum}
\sum_{i=1}^N L_{i0}=0.
\end{equation}
Assumptions \eqref{hp.Ltilde}-\eqref{Ass.L} are standard in non-equilibrium thermodynamics: the mobility coefficients $L_{ij}$ scale naturally with the mass densities $\rho_i$, satisfy symmetry and conservation, and ensure boundedness and coercivity on the projected space.

{\bf H6.} {\sf Reaction terms.} 
Following  \cite{DDGG20} and we take 
in the following form
 \begin{align}
r_i = -\sum_{j=1}^l \frac{\partial \Psi}{\partial X_j}(D^R) \gamma_i^j, \mbox{ where } D_j^R = \sum_{k=1}^N
\gamma_k^j \frac{\mu_k}{T},\quad i=1,\ldots,N,\quad j=1,\ldots,l,
\label{React_1}
\end{align}
where $\Psi \colon \R^l \to \R$ is a convex potential with suitable growth and $\vec\gamma^j \in \R^N$ is a vectorial stoichiometric coefficient associated with $j$th reaction, while $l$ is a number of active reactions. 
We assume that there exists $C_1, C_2>0$ and $a>2$ such that
\begin{align}
 \Psi(X) \geq C_1 |X|^a, &\quad |\nabla \Psi(X)| \leq C_2 |X|^{a-1},
 \label{Add_Psi} \\
F(X):=X\cdot\nabla\Psi(X) & \textrm{ is convex, lower semicontinuous. }
\label{Add_Fatou.5}
\end{align}
From \eqref{MassCons} it follows that $\sum_{i=1}^N \gamma_i^j = 0$, $\forall j=1,\ldots,l$.
As a consequence, $\vec{\gamma}^j \in \sspan\{\veb{1}\}^{\perp}$.
Additionally, we assume that the linear hull of all $\veb{\gamma}^j$ is equal to $\sspan\{\veb{1}\}^{\perp}$.
 For $i=1,\ldots,N$ we have that $r_i = r_i(\Pi(\vec\mu/T))$. Moreover, from \eqref{React_1} and $\eqref{Add_Psi}_1$
 it follows that $\exists C_r >0$   such that 
 \begin{align}
        \label{def.rtilde}
	\displaystyle  -\sum_{i=1}^N r_i(\Pi(\vec \zeta))\zeta_i 
	\geq  C_r\left|\Pi\vec\zeta\right|^a.
\end{align}
\begin{remark}
	\label{remark.fluxes}
From the structure of the fluxes we have two useful relations. 
	First, from \eqref{flux.J}, \eqref{flux.q} and \eqref{heat_cond} it follows (see \eqref{Ass.L})
\begin{align}
	-\sum_{i=1}^N  \veb{J}_i\cdot \nabla \Big( \frac{\mu_i}{T} \Big)   + \veb{q}\cdot \nabla \Big( \frac{1}{T} \Big) 
	= \sum_{i,j=1}^N L_{ij} \nabla \big( \frac{\mu_i}{T} \big) \cdot \nabla \big( \frac{\mu_j}{T} \big) + \kappa(T) \big| \nabla \log T \big|^2 \geq 0.
	\label{Eq.9}
\end{align}
Second, using \eqref{chp.1}, \eqref{thp.1} and \eqref{eng.1} after some calculations we get
\begin{align}
	-\sum_{i=1}^N \rho_i \veb v \cdot \nabla \Big( \frac{\mu_i}{T} \Big)  
	+ ( (\rho e) + p ) \veb v \cdot \nabla \big( \frac{1}{T} \big)
	 = \frac{K}{T} \lambda(S,T) | \nabla p|^2. 
	 \label{Eq.9-2}
\end{align}
We note also that 
\begin{equation}
	\big(\Pi(\vec\mu/T)\big)_i = \log \rho_i - \frac{1}{N} \sum_{k=1}^N \log \rho_k.
	\label{Proj_1}
	\end{equation}
\end{remark}

\subsection{Entropy balance equation} \label{subsec.entropybal} 
 One of the components of the weak variational entropy solution concept for our problem is the entropy balance equation, which is derived from the mass and energy balance equations \eqref{cons_w} and \eqref{cons_e}. More precisely, one starts by multiplying \eqref{cons_w} by $-\mu_i/T$ and summing $i=1,\ldots,N$, and multiplies \eqref{cons_e} by $1/T$ to finally sum the obtained equations and consider the constitutive relations.
For a triplet $(\vec\rho,T,S): \Omega\times\R_+
\to \R^N_+\times\R_+\times [0,1]$ which is a smooth solution to  \eqref{cons_w}--\eqref{bc},
a straightforward manipulation yields
{\sf the entropy balance equation}
\begin{equation}\label{EBE}
\begin{split}
&\frac{\partial}{\partial t}\left[\Phi S(\rho\eta) + (1-\Phi)(\rho \eta)_s \right] + \dv\Big( (\rho \eta) \veb{v} - \sum_{i=1}^N \frac{\mu_i}{T} \veb{J}_i + \frac{\veb{q}}{T} \Big)   \\
& = \frac{K}{T}\lambda(S,T)|\nabla p|^2 +
  \sum_{i,j=1}^N L_{ij} \nabla \big( \frac{\mu_i}{T} \big) \cdot \nabla \big( \frac{\mu_j}{T} \big) 
  + \kappa(T) \big| \nabla \log T \big|^2 
- \Phi \frac{1}{T} f'(S) \big( \partial_t S \big)^2
-\sum_{i=1}^N r_i\frac{\mu_i}{T}. 
\end{split}
\end{equation}

The entropy production given by the right-hand side of \eqref{EBE} is nonnegative
thanks to the assumptions that  $(L_{ij}) \in \R^{N \times N}$ is a positive semidefinite matrix, $\kappa(T)\geq 0$, the mobility function $\lambda$ in \eqref{mobility} is nonnegative, $f'(S)\leq 0$ and $-\sum_{i=1}^N r_i\frac{\mu_i}{T}\geq 0$. 
Therefore, the second law of thermodynamics holds true.

%-------------------------------------------------------------------------------------------------------%
\section{Weak formulation and main result} \label{sec.sol} 

In this section, we introduce the concept of a weak solution to our problem and formulate the main result of the paper. Specifically, we work with weak variational entropy solutions, a notion first introduced by Feireisl in \cite{Feir04} in the context of the Navier–Stokes–Fourier system. A central ingredient in our analysis is the proper formulation of the energy balance, which serves as the cornerstone of the weak sequential stability result. Following the approach of \cite[Chapter 2]{Feir04}, we combine the weak formulation of the entropy inequality with the integrated form of the total energy balance, thereby obtaining a framework suitable for the analysis of our two-phase multicomponent flow model.
\begin{definition}  \label{def.weaksol}
 We say that the Lebesgue-measurable function 
\[ (\vec\rho,T,S): \Omega\times\R_+
	\to \R^N_+\times\R_+\times [0,1], \]
is a weak variational entropy solution to \eqref{cons_w}--\eqref{bc} provided the following identities hold:
\begin{enumerate}
 \item[1.] The weak formulation of the partial mass balances  \eqref{cons_w}: for $i=1,\ldots,N$
   \begin{equation}
		\begin{split}
		& -\int_0^\tf\int_\Omega \Phi S \rho_i \partial_t \varphi dx dt
		- \int_0^\tf\int_\Omega(\rho_i \veb{v} + \veb{J}_i)\cdot\nabla\varphi dx dt
		 +\int_0^\tf\int_{\pa\Omega}\sum_{k=1}^Nb_{ik}\left(\frac{\mu_k}{T} - \frac{\mu_{0,k}}{T_0}\right)\varphi d\sigma dt \\
		 & \qquad = 
		\int_\Omega \Phi S_{in} \rho_{i,in} \varphi(\cdot,0) dx + \int_0^\tf\int_\Omega r_i\varphi dx dt, 
		\quad  \forall \varphi\in C^1(\overline{Q}_\tf), \; \varphi(\cdot, \tf) = 0;
		\end{split}
			\label{weak.1}
	\end{equation}
 \item[2.]  The integrated form of the total energy balance \eqref{cons_e}
\begin{equation}\label{weak.2}
\begin{split}
	\int_\Omega\big (\Phi E_f(t) + (1-\Phi) E_s(t) \big)dx
	& + \alpha\int_0^{t}\int_{\pa\Omega} (T-T_0)d\sigma dt \\
	&= 
	\int_\Omega\big (\Phi E_{f, in} + (1-\Phi) E_{s,in} \big)dx,
	\quad\forall t \in [0,\tf], 
	\end{split}
\end{equation}
where $ E_{f, in} = E_f(S_{in}, \rho_{in}, T_{in})$ and $E_{s,in} = E_s(T_{in})$.
 \item[3.]  The weak formulation of the entropy balance equation \eqref{EBE}
\begin{equation}\label{weak.EBE}
\begin{split}
        & \int_0^\tf\int_\Omega \Big[ \Phi S (\rho \eta)_w + (1-\Phi)(\rho \eta)_s \Big] \partial_t \varphi dx dt
         	 -\int_0^\tf\int_\Omega\Big( (\rho \eta) \veb{v} - \sum_{i=1}^N \frac{\mu_i}{T} \veb{J}_i + \frac{\veb{q}}{T} \Big)\cdot\nabla\varphi dx dt  \\
		& \qquad =\int_0^\tf\int_\Omega\left( 
		\sum_{i,j=1}^N L_{ij} \nabla \big( \frac{\mu_i}{T} \big) \cdot \nabla \big( \frac{\mu_j}{T} \big) 
		+ \kappa(T) \big| \nabla \log T \big|^2  \right) \varphi dx dt \\ 
		&\qquad + \int_0^\tf {\int_\Omega} \left( 
		\frac{K}{T}\lambda(S,T)|\nabla p|^2
		-\Phi \frac{1}{T} f'(S) \big( \partial_t S \big)^2
		-\sum_{i=1}^N r_i\frac{\mu_i}{T}
		\right)  \varphi dx dt \\
		&\qquad +\int_0^{\tf}\int_{\pa\Omega}\left(\alpha\frac{T_0 - T}{T}
		+\sum_{i,j=1}^N b_{ij}\frac{\mu_i}{T}\left(\frac{\mu_j}{T} - \frac{\mu_{0,j}}{T_0}\right)
		\right)\varphi d\sigma dt  \\
		& \qquad + \int_{\Omega}  \Big[ \Phi S_{in} (\rho \eta)_{in} + (1-\Phi) (\rho \eta)_{s,in} \Big] \varphi(\cdot,0) dx   + \langle\xi , \varphi\rangle,
	\end{split}
	\end{equation}
	for every $\varphi\in C^1(\overline{Q}_\tf)$, $\varphi(\cdot, \tf) = 0$, $\varphi\geq 0$ a.e.~in $Q_\tf$, for a suitable choice of $\xi\in \mathcal M(\overline{Q}_\tf)$ a nonnegative Radon measure;
 \item[4.] The dynamic capillary pressure relation \eqref{PC.Sw} holds a.e.~in $Q_\tf$. 	
  \end{enumerate}	
	 Furthermore, the constitutive relations  \eqref{thp.1}--\eqref{entropy}, \eqref{Skel.Entr}, \eqref{Skel.Ener}  are satisfied a.e.~$x\in\Omega$, $t>0$ 
for some suitable exponent
$a > 2$ and $\beta$, $q$ given by \eqref{hp.beta}, \eqref{hp.Pcf} (respectively),	
while the constitutive relation for 
the chemical potentials \eqref{chp.1} and the definition of 
the reaction terms \eqref{def.rtilde} are fulfilled in the non-vacuum 
region $Q_\tf\backslash\{\rho=0\}$. 
\end{definition}

In Definition \ref{def.weaksol}  the standard weak formulation of the energy equation is replaced by the weak formulation of the entropy equation and the global total energy balance. This modification is necessary because of the lack of compactness in the nonlinear flux term $(\rho e + p)\veb{v}$, a well-known difficulty in the analysis of time-dependent compressible multicomponent mixtures \cite{EZ15, BPZ17}. It is important to note, however, that in this setting
the entropy production rate is interpreted as a non-negative Radon measure, which ensures consistency with the thermodynamic framework.
Finally, in Definition \ref{def.weaksol}, we impose the constitutive relation for the 
chemical potentials \eqref{chp.1} and the definition of the reaction terms \eqref{def.rtilde} only in the non-vacuum region $Q_\tf\backslash\{\rho=0\}$.  This restriction is necessary because, in the vacuum region 
$\{\rho =0\}$, we cannot establish almost everywhere convergence of the projected relative chemical potentials due to the lack of compactness; see Remark \ref{VacuumRemark}. Consequently, the validity of the aforementioned relations in 
the vacuum region cannot be ensured within our framework. 

The main result of this paper is the following theorem.

\begin{theorem}[Weak sequential stability]\label{thm.ws}
Let $(\rhonv,\Tn,\Sn)$ be a sequence of smooth solutions to \eqref{cons_w}--\eqref{PC.Sw}, \eqref{bc} with initial data
\[
  \rho_i^{(n)}(\cdot,0) = \rho^{(n)}_{i,in}, \quad 
  S^{(n)}(\cdot,0) =  S_{in}^{(n)}, \quad 
  T^{(n)}(\cdot,0) = T_{in}^{(n)} .
\]
Assume further that $\rhon_i, \Tn, \Sn > 0$ a.e.~in $Q_\tf$ for $i=1,\ldots,N$ and $n\in\N$. 
Suppose that the following convergences of the initial data hold:
\begin{align*}
  S^{(n)}_{in}\rho^{(n)}_{i,in} &\to  S_{in}\rho_{i,in}, \\ 
  \Phi S^{(n)}_{in} (\rho\eta)^{(n)}_{in} + (1-\Phi)(\rho\eta)^{(n)}_{s,in} 
  &\to \Phi S_{in} (\rho\eta)_{in} + (1-\Phi)(\rho\eta)_{s,in},
\end{align*}
weakly in $L^1(\Omega)$ and
\[ \int_{\Omega} (\Phi E^{(n)}_{f,in} + (1-\Phi)E^{(n)}_{s,in} ) dx \to \int_{\Omega} (\Phi E_{f,in} + (1-\Phi)E_{s,in})dx. \]
Then, up to a subsequence, $(\rhonv,\Tn,\Sn)$ converges strongly in $L^1(Q_\tf)$ as $n\to\infty$ to a triplet $(\rhov,T,S)$, which is a variational entropy solution to \eqref{cons_w}--\eqref{bc} in the sense of Definition \ref{def.weaksol}.
\end{theorem}

The concept of weak sequential stability, in the sense of Feireisl \cite{Feir04}, together with weak compactness results, represents a crucial step in the analysis of global existence. We say that the system \eqref{cons_w}--\eqref{PC.Sw}, \eqref{ic.1}, \eqref{bc} is weakly sequentially stable if every sequence of solutions satisfying the a priori estimates established in Lemmas \ref{lem.tech.1}, \ref{Lemma_Gamma}, \ref{lem.ap.est} and Propositions \ref{prop.5}--\ref{prop.7} admits a subsequence that converges weakly to a variational entropy solution of the system \eqref{cons_w}--\eqref{PC.Sw}, \eqref{ic.1}, \eqref{bc}.

%---------------------------------------------------------------------%
\section{A priori estimates}\label{sec.apriori}

In this section we give appropriate a priori estimates for a sequence $(\rhonv,\Tn,\Sn)$ of smooth solutions to to \eqref{cons_w}--\eqref{PC.Sw}, \eqref{ic.1}, \eqref{bc}. The estimates given in this section are valid for each $n \in \N$ but we omit writing the index.
 We start with estimates which depend only on the structure of equation \ref{PC.Sw}, given by Lemma \ref{lem.tech.1} and Lemma \ref{Lemma_Gamma}.
 
\begin{lemma}
\label{lem.tech.1}
 For smooth solution to \eqref{cons_w}--\eqref{PC.Sw}, \eqref{ic.1}--\eqref{bc} there exists a constant $C$  such that
 \begin{align} 
    & \|P_c(S)\|_{L^1(Q_\tf)} \leq C, \label{PC.Sw.Est1}\\
    &  \|f(S)\|_{L^{\infty}(0,\tf;L^{1}(\Omega))} \leq
    C(1+ \|Sp\|_{L^{\infty}(0,\tf;L^{1}(\Omega))}) \label{PC.Sw.Est1.New}\\
     & \|p\|_{L^1(Q_\tf)}  \leq C( 1+ \|Sp\|_{L^1(Q_\tf)}).  \label{PC.Sw.Est2}
 \end{align}
\end{lemma}

\begin{proof}
Let $\zeta : [0,1]\to [0,1]$ be a $C^1([0,1])$ 
cutoff function such that 
$\zeta = 1$ on $[0,1/3]$, $\zeta = 0$ on $[2/3,1]$, and
$\zeta$ is decreasing in $(1/3,2/3)$.
Let us multiply \eqref{PC.Sw} by $\zeta(S)$ and define
$f_1(s) = \int_0^s\zeta(u)f'(u)du$ for $0\leq s\leq 1$.
Notice that $f_1$ is bounded by {{\bf H3}} and the assumptions on $\zeta$. We obtain
\begin{align*}
\pa_t f_1(S) + \zeta(S)P_c(S) + \zeta(S)p = 0.
\end{align*}
Integrating the above identity leads to
\begin{align*}
\int_\Omega f_1(S(t))dx + \int_0^t\int_\Omega(\zeta(S)P_c(S) + \zeta(S)p)dx dt' = 
\int_\Omega f_1(S^{in})dx,\quad t>0.
\end{align*}
From \eqref{thp.1} we have that  $p\geq -p_{at}$ a.e.~in $Q_\tf$. On the other side, $P_c$ is a nonnegative function while $f_1$ is bounded, we obtain
that $\zeta(S)P_c(S)$ is bounded in $L^1(Q_\tf)$. Since 
$(1-\zeta(S))P_c$ is bounded (because $P_c$ is nonnegative, decreasing, and $1-\zeta$ vanishes near $0$), we infer  \eqref{PC.Sw.Est1}:
\begin{align}\label{int.PcS.ub}
\int_0^t\int_\Omega P_c(S)dx dt\leq C.
\end{align}
On the other hand, multiplying \eqref{PC.Sw} times $1-\zeta(S)$ and defining 
$f_2(s) = \int_0^s(1-\zeta(u))f'(u)du$ yields
\begin{align*}
\pa_t f_2(S) + (1-\zeta(S))P_c(S) + (1-\zeta(S))p = 0.
\end{align*}
Integrating the above identity leads to
\begin{align*}
-\int_\Omega f_2(S(t))dx =
\int_0^t\int_\Omega((1-\zeta(S))P_c(S) + 
(1-\zeta(S))p)dx dt' - 
\int_\Omega f_2(S^{in})dx,\quad t>0.
\end{align*}
We know that $(1-\zeta(S))P_c$ is bounded.
 Using estimate $(1-\zeta(S))|p| \leq 3S|p|$ {and {\bf H1}}, it follows that
\begin{align*}
-\int_\Omega f_2(S(t))dx\leq C (1+ \int_{\Omega} S|p|dx),\quad t>0.
\end{align*}
Since $-f_2(s) = -f_2(2/3)+f(2/3) - f(s)$ for $s>2/3$ and the fact that $f$ is bounded in $[0,2/3]$ (because it is decreasing, smooth in $(0,1)$, and $f(0)<\infty$), we conclude
\begin{align}\label{int.fS.lb}
-\int_\Omega f(S(t))dx\leq C (1+ \int_{\Omega} S|p|dx),\quad t>0,
\end{align}
so that \eqref{PC.Sw.Est1.New} follows.
Integrating \eqref{PC.Sw} and employing \eqref{int.PcS.ub}, \eqref{int.fS.lb} 
yields  \eqref{PC.Sw.Est2}.
\end{proof}
 
\begin{lemma}\label{Lemma_Gamma}
For any $\eps>0$ sufficiently small, there exists $\delta > 0$, depending only on $\eps$ such that
\[ S(t) \geq \eps \; \textrm{ for } \; t\geq \delta \; \textrm{ a.e. in }\; \Omega,      \]
for any smooth solution to \eqref{cons_w}--\eqref{PC.Sw}, \eqref{ic.1}--\eqref{bc}.
Consequently $S >0$ a.e. on  $Q_\tf$.
    \end{lemma}
\begin{proof}
According to \eqref{hp.Pcf2} we can select $f(0)> p_{at}/\Gamma$, where
\[\Gamma\equiv \inf_{0<s<s_0}\frac{P_c(s)}{f(s)}, \; \mbox{ with } \; f(s_0) = 0.\] 
 Since $p\geq -p_{at}$
and \eqref{PC.Sw} holds, it follows
\begin{align*}
\pa_t f(S) + \Gamma f(S) \leq p_{at}.
\end{align*}
A Gronwall argument yields, for $t >0$
\begin{align*}
f(S(t))\leq \Big( f(S^{in})   - \frac{p_{at}}{\Gamma} \Big)e^{-\Gamma t} + \frac{p_{at}}{\Gamma}, \; \mbox{ a.e.~in } \; \Omega .
\end{align*}
Monotonicity of the function $f$ together with the condition $f(0)> p_{at}/\Gamma$
gives that
for any $t \geq \delta >0$ we have
\begin{align*}
f(S(t))\leq f(0)e^{-{\Gamma} \delta} 
+ \frac{p_{at}}{{\Gamma}}(1-e^{-{\Gamma} \delta}),
\quad \textrm{ a.e.~in }\; \Omega.
\end{align*}
For given $0< \eps < p_{at}/\Gamma$ we can select sufficiently small $\delta >0$ 
such that 
\[ S(t) \geq  f^{-1}\Big(f(0)e^{-{\Gamma} \delta} 
+ \frac{p_{at}}{{\Gamma}}(1-e^{-{\Gamma} \delta})\Big) = \eps. \]
Consequently, we have $S >0$ a.e. on  $Q_\tf$.
\end{proof}
   The main a priori estimates resulting from the equations for the entropy and energy balance are given by Lemma \ref{lem.ap.est}.
The derivation of the a priori estimates is based on the application of the entropy equation in conjunction with constitutive relations and hypotheses. In the mathematical analysis, we used standard techniques:  Poincar\'e inequality (see \cite{FeirNov09}, Lemma 10.9 (ii)), Sobolev embeddings, and the interpolation arguments.
 
\begin{lemma}\label{lem.ap.est}
Any smooth solution to \eqref{cons_w}--\eqref{PC.Sw}, \eqref{ic.1}--\eqref{bc} satisfies
\begin{align}
	\|\Phi E_f + (1-\Phi) E_s\|_{L^\infty(0,\tf; L^1(\Omega))} &\leq C,
	\label{Est.E.LinfL1.2}\\
\|S^{1/\gamma}\rho\|_{L^\infty(0,\tf; L^\gamma(\Omega))} + 
\|S p\|_{L^\infty(0,\tf; L^1(\Omega))} &\leq C,\label{Est.E.LinfL1.1} \\
\|p\|_{L^1(Q_\tf)} +
\|\rho\|_{L^\gamma(Q_\tf)} &\leq C,\label{Est.p}\\
	{ \|\log T\|_{L^\infty(0,\tf; L^1(\Omega))} }+ 
 \|\log T \|_{L^2(0,\tf; H^1(\Omega))} 
 &\leq C, \label{Est.A}\\
 \label{Est.T}
	\|T\|_{L^\infty(0,\tf; L^1(\Omega))}  + \|T^{\beta/2} \|_{L^2(0,\tf; H^1(\Omega))}
	+ \|T\|_{L^{ \beta + \frac{2}{3} }(Q_\tf)} &\leq C,\\
 \|\Pi \big( \mub/T \big)\|_{L^2(0,\tf; H^1(\Omega))} 
 +\|\Pi \big( \mub/T \big)\|_{L^a(Q_\tf)} 
 &\leq C,
\label{Est.B.2}\\
\left\|\sqrt{\frac{\lambda(S,T)}{T}}\nabla p\right\|_{L^2(Q_\tf)}
&\leq C,\label{AJPP.1}\\
\label{mu.boundary} 
\|T^{-1}\|_{L^1(\pa\Omega\times (0,\tf))} + 
\left\| \sum_{i,j=1}^N b_{ij}\frac{\mu_i}{T}\frac{\mu_j}{T} \right\|_{L^1(\pa\Omega\times (0,\tf))} &\leq C,\\
 \|T^{-1} f'(S) \big( \partial_t S \big)^2\|_{L^1(Q_\tf)} +
\|f(S)\|_{L^\infty(0,\tf; W^{1,q}(\Omega))}  +{\|\pa_t f(S)\|_{L^1(Q_\tf)}}  
&\leq C, \label{Est.S}
\end{align}
where here and in the following, $C > 0$ denotes a generic constant depending only on the given data and (possibly) on $\tf$.
\end{lemma}
 \begin{proof}
We start by integrating the energy balance equation \eqref{cons_e} in space and time and using
the boundary conditions \eqref{bc}. It follows
\begin{align}\label{Energ}
\int_\Omega\big (
\Phi E_f + (1-\Phi) E_s \big)\vert_{t=0}^{t=t_1}dx
+ \alpha\int_0^{t_1}\int_{\pa\Omega} (T-T_0)
 d\sigma dt = 0,\quad\forall t_1 \in (0,\tf),
\end{align}
giving
\begin{equation}
\label{est.totE}
\sup_{t\in [0,\tf]}\int_\Omega\big (
\Phi E_f + (1-\Phi) E_s \big)dx
+ \alpha\int_0^\tf\int_{\pa\Omega} T d\sigma dt\leq C,
\end{equation}
from where \eqref{Est.E.LinfL1.2} directly follows.
Using $S \rho^\gamma \leq E_f $ (see   \eqref{def.Ef}, \eqref{eng.1}) and $Sp \leq C S (\rho e)$ 
(see Remark \ref{Est_Add.1-basic}),  
the $L^\infty(0,\tf; L^\gamma(\Omega))$ bound for $S^{1/\gamma}\rho$
and the $L^\infty(0,\tf; L^1(\Omega))$ bound for $Sp$ follow.
Therefore \eqref{Est.E.LinfL1.1} holds. 
From $E_s = c_s T$ using  \eqref{est.totE}  
it follows $\|T\|_{L^\infty(0,\tf; L^1(\Omega))}\leq C$.
Estimate  on pressure in \eqref{Est.p} follows from \eqref{Est.E.LinfL1.1} and \eqref{PC.Sw.Est2}.
The estimate on $\rho$  in  \eqref{Est.p} follows from  $(\gamma -1) \rho^\gamma \leq p + p_{at}$  (see \eqref{thp.1}).

Let us now integrate the entropy balance equation \eqref{EBE} with respect to space and time (from $t=0$ to $t=t_1$) and employ \eqref{ic.1}--\eqref{bc}. We get
 \begin{equation}
 \begin{split}
  \int_0^{t_1} \int_\Omega 
  &\Big( \sum_{i,j = 1}^N  L_{ij} \nabla \big(  \frac{\mu_i}{T}\big) \cdot \nabla \big(   \frac{\mu_j}{T} \big) + \kappa(T)|\nabla \log T|^2   \Big) dx dt  \\
  & + \int_0^{t_1}\int_\Omega
  \frac{K}{T}\lambda(S,T)|\nabla p|^2 dx dt
  -\int_0^{t_1}\int_\Omega\sum_{i=1}^N r_i\frac{\mu_i}{T}dx dt \\
  & +\int_0^{t_1}\int_{\pa\Omega}\left(\alpha\frac{T_0 - T}{T}
  +\sum_{i,j=1}^N b_{ij}\frac{\mu_i}{T}\left(\frac{\mu_j}{T} - \frac{\mu_{0,j}}{T_0}\right)
   \right)d\sigma dt\\
   & {-\int_0^{t_1}\int_\Omega \frac{\Phi}{T} f'(S) (\partial_t S)^2 dx dt} = \int_\Omega \Big(  \Phi S(\rho\eta)  
  + (1-\Phi)(\rho \eta)_s   \Big) dx\vert_{t=0}^{t=t_1},
  \end{split}
 \label{EBE.1}
 \end{equation}
 where $\kappa(T) $ is given by \eqref{heat_cond}.
 Summing \eqref{Energ} and \eqref{EBE.1} we get  the global entropy-energy equality for smooth solutions:
 \begin{equation}
 \label{GEE.Ineq}
 \begin{split}
     & \int_\Omega \Big(\Phi (E_f - S(\rho \eta)) + (1-\Phi)(E_s - (\rho \eta)_s \Big) dx\vert_{t=t_1} \\ 
  & +\int_0^{t_1}\int_{\Omega} \Big( \sum_{i,j = 1}^N  L_{ij} \nabla \big(  \frac{\mu_i}{T}\big) \cdot \nabla \big(   \frac{\mu_j}{T} \big) + \kappa(T)|\nabla \log T|^2   \Big) dx dt  \\
  & + \int_0^{t_1}\int_\Omega
  \frac{K}{T}\lambda(S,T)|\nabla p|^2 dx dt
  -\int_0^{t_1}\int_\Omega\sum_{i=1}^N r_i\frac{\mu_i}{T}dx dt \\
  & +\int_0^{t_1}\int_{\pa\Omega}\left(\alpha (T_0 - T) \left(\frac{1}{T} - 1 \right)
  +\sum_{i,j=1}^N b_{ij}\frac{\mu_i}{T}\left(\frac{\mu_j}{T} - \frac{\mu_{0,j}}{T_0}\right)
   \right)d\sigma dt\\
   & -\int_0^{t_1}\int_\Omega \frac{\Phi}{T} f'(S) (\partial_t S)^2 dx dt  = \int_\Omega \Big(  \Phi (E_f - S(\rho \eta)) + (1-\Phi)(E_s - (\rho \eta)_s  \Big) dx\vert_{t=t_0},
   \end{split}
 \end{equation}
Let us now consider the boundary term in \eqref{EBE.1}. 
Since the matrix $(b_{ij})_{i,j=1}^N$ is positive semidefinite and \eqref{hp.b} holds, 
via Cauchy-Schwartz inequality it follows 
\[
\sum_{i,j=1}^N b_{ij}\frac{\mu_i}{T}\left(\frac{\mu_j}{T} - \frac{\mu_{0,j}}{T_0}\right) \geq 
\frac{1}{2}\sum_{i,j=1}^N b_{ij}\frac{\mu_i}{T}\frac{\mu_j}{T} 
-\frac{1}{2}\sum_{i,j=1}^N b_{ij}\frac{\mu_{0,i}}{T_0}\frac{\mu_{0,j}}{T_0}.
\]
Furthermore, \eqref{def.rtilde} imply
\[
-\int_0^\tf\int_\Omega\sum_{i=1}^N r_i\frac{\mu_i}{T}dx dt
\geq C_r\|\Pi(\frac{\vec\mu}{T})\|^a_{L^a(Q_\tf)}. 
\]
On the other hand, \eqref{Ass.L} implies that 
\[
 \sum_{i,j=1}^N L_{ij} \nabla \big( \frac{\mu_i}{T} \big) \cdot
  \nabla \big( \frac{\mu_j}{T} \big)  
       \geq C |\nabla \Pi \big( \frac{\vec\mu}{T}  \big) |^2.
\]
From \eqref{kappa} it follows
\begin{align} 
	\kappa(T)|\nabla \log T|^2 & \geq \kappa_1(1+T^\beta) |\nabla \log T|^2 \geq c(|\nabla \log T|^2 + |\nabla T^{\beta/2}|^2 ).  \label{Eq.15}
\end{align}

In this way, taking into account above estimates and Remark~\ref{Est_Add.1-basic},
we obtain the final estimate: 
 \begin{equation}
 \label{GEE.Ineq-1}
 \begin{split}
     & C\int_\Omega \Big( S\rho^\gamma + T + |\log T|) \Big) dx\vert_{t=t_1} \\ 
  & + C\int_0^{t_1}\int_{\Omega}\Big(   |\nabla \Pi \big( \frac{\vec\mu}{T}  \big) |^2
	 + |\nabla \log T|^2   + |\nabla T^{\beta/2}|^2 \Big) dx dt  \\
  & + \int_0^{t_1}\int_\Omega
  \frac{K}{T}\lambda(S,T)|\nabla p|^2 dx dt
	+ C_r\|\Pi(\frac{\vec\mu}{T})\|^a_{L^a(Q_\tf)}  \\
	 & +\int_0^{t_1}\int_{\pa\Omega}\left(\alpha \left(T+\frac{T_0}{T}\right) + \frac{1}{2}\sum_{i,j=1}^N b_{ij}\frac{\mu_i}{T}\frac{\mu_j}{T} \right)d\sigma dt\\
   & +\int_0^{t_1}\int_\Omega \frac{\Phi}{T} | f'(S) | (\partial_t S)^2 dx dt \\
    & \leq \int_\Omega \Big(  \Phi (E_f - S(\rho \eta)) 
	+ (1-\Phi)(E_s - (\rho \eta)_s  \Big) dx\vert_{t=t_0} \\ 
	 & +\int_0^{t_1}\int_{\pa\Omega}\left(\alpha (1+T_0)  
	 + \frac{1}{2} \sum_{i,j=1}^N b_{ij}\frac{\mu_{0,i}}{T_0}\frac{\mu_{0,j}}{T_0}
	  \right)d\sigma dt
   \end{split}
 \end{equation}
Most of the bounds in (\ref{Est.A})--(\ref{Est.S}) follow directly from (\ref{GEE.Ineq-1}). Furthermore,
the $ L^2(0,\tf; H^1(\Omega))$-bound on $\log T$ in (\ref{Est.A}) follow form Poincar\'e inequality, 
the $ L^2(0,\tf; H^1(\Omega))$-bound on $T^{\beta/2}$ in (\ref{Est.T}) is a consequence of Lemma~\ref{lema-TT}.
Finally, the last estimate in (\ref{Est.T}) is a consequence of a simple interpolation between 
spaces $ L^\infty(0,\tf; L^1(\Omega))$ and  $ L^\beta(0,\tf; L^{3\beta}(\Omega))$.
 Bound \eqref{Est.r} on the reaction terms $r_1,\ldots,r_N$ comes straightforwardly from \eqref{def.rtilde}, \eqref{Est.B.2}.
 
 Let us now show \eqref{Est.S}. 
The bound for the first term in \eqref{Est.S} follows immediately from \eqref{GEE.Ineq-1}), so it remains to bound the second term.
Take the gradient of \eqref{PC.Sw}, and multipy the resulting equation times 
$|\nabla f(S)|^{q-2}\nabla f(S)$ and integrate in $\Omega\times [0,t_1]$, where $t_1\in [0,\tf]$ is generic. It follows
\begin{align*}
&\frac{1}{q}\int_\Omega |\nabla f(S)|^q dx\Big\vert_{t=0}^{t=t_1} 
+ \int_0^{t_1}\int_\Omega |f'(S)|^{q-2} f'(S)P_c'(S) |\nabla S|^q dx dt\\
&\qquad
= -\int_0^{t_1}\int_\Omega |\nabla f(S)|^{q-2}\nabla f(S)\cdot\nabla p dx dt .
\end{align*}
Young's inequality and \eqref{AJPP.1} lead to, for any $\varepsilon > 0$,
\begin{align*}
&\frac{1}{q}\int_\Omega |\nabla f(S)|^q dx\Big\vert_{t=0}^{t=t_1} 
+ \int_0^{t_1}\int_\Omega |f'(S)|^{q-2}f'(S)P_c'(S) |\nabla S|^q dx dt\\
&\quad\leq \frac{\eps}{2}\int_0^{t_1}\int_\Omega\frac{T}{\lambda(S,T)} 
|\nabla f(S)|^{2(q-1)} dx dt + 
\frac{1}{2\eps}
\int_0^{t_1}\int_\Omega\frac{\lambda(S,T)}{T}|\nabla p|^2 dx dt\\
&\quad\leq \frac{\eps (q-1)}{q}\int_0^{t_1}\int_\Omega\lambda^{-q/(2q-2)}(S,T) |\nabla f(S)|^q dx dt + 
\frac{\eps (2-q)}{2q}\int_0^{t_1}\int_\Omega T^{q/(2-q)} dx dt 
+ C\eps^{-1} .
\end{align*}
Thanks to \eqref{mobility} and the boundedness of $\nu$, it follows
$1/\lambda(S,T) = \nu(T)/k_r(S)\leq C/k_r(S)$. Furthermore \eqref{hp.beta}, 
\eqref{Est.T} yield that $\|T^{q/(2-q)}\|_{L^1(Q_\tf)}\leq C$.
From these estimates and the fact that both $P_c$ and $f$ are nonincreasing, we deduce
\begin{align*}
&\frac{1}{q}\int_\Omega |\nabla f(S)|^q dx\Big\vert_{t=0}^{t=t_1} 
+ \int_0^{t_1}\int_\Omega |f'(S)|^{q-1} |P_c'(S)| |\nabla S|^q dx dt\\
&\qquad\leq C\eps\int_0^{t_1}\int_\Omega
k_r(S)^{-q/(2q-2)}|f'(S)|^q |\nabla S|^q dx dt + C\eps^{-1} .
\end{align*}
Since $k_r$ is uniformly positive in $[1/2,1]$, it follows
\begin{align*}
&\frac{1}{q}\int_\Omega |\nabla f(S)|^q dx\Big\vert_{t=0}^{t=t_1} 
+ \int_0^{t_1}\int_\Omega |f'(S)|^{q-1} |P_c'(S)| |\nabla S|^q dx dt\\
&\quad\leq 
   C\eps   \int_0^{t_1} \int_\Omega  |\nabla f(S(t))|^q dx dt\\
  &\qquad + C\eps\int_{Q_{t_1}\cap\{ S<1/2 \} }
k_r(S)^{-q/(2q-2)}|f'(S)|^q |\nabla S|^q dx dt + C\eps^{-1} .
\end{align*}
Applying \eqref{hp.Pcf} with {$\eps< c_{f}/C$ }
leads to
\begin{align*}
\int_\Omega |\nabla f(S)|^q dx\Big\vert_{t=0}^{t=t_1} 
	\leq C + 
	{C \int_0^{t_1}\int_\Omega |\nabla f(S(t))|^q dxdt},\quad \forall t_1\in [0,\tf],
\end{align*}
which via a Gronwall argument  and {\bf H1} yields a bound for $\nabla f(S)$ in $L^\infty(0,\tf; L^q(\Omega))$. However, since \eqref{int.fS.lb} holds and $f$ is upper bounded, \eqref{Est.S} follows via Poincar\'e's Lemma.  

An estimate \eqref{Est.S} for $\pa_t f(S)$ is immediately found from \eqref{PC.Sw}, \eqref{PC.Sw.Est1} and
\eqref{Est.p}.
This finishes the proof of Lemma~\ref{lem.ap.est}.
\end{proof}

In the proof of Lemma \ref{lem.ap.est} we have used the following result which is proved in the Appendix.

\begin{lemma}
	\label{lema-TT}
Let $T \in L^{\infty}(0,\tf;L^1(\Omega))$, $ \nabla T^{\beta/2} \in L^2(Q_\tf)$ 
	and $\nabla \log T \in L^2(Q_\tf)$. 
	Then $T^{\beta/2} \in L^2(0,\tf; H^1(\Omega))$ 
	and  $\|T^{\beta/2}\|_{L^2(0,\tf; H^1(\Omega))} \leq C$ , where constant $C$ depends on the following norms: 
	$\|T\|_{L^\infty(0,\tf;L^1(\Omega))}$, $\|\nabla T^{\beta/2} \|_{L^2(Q_\tf)}$ and $\|\nabla \log T \|_{L^2(Q_\tf)}$.
\end{lemma}

The following three propositions (Prop. \ref{prop.5}, Prop. \ref{prop.6}, Prop. \ref{prop.7}) give  a priori estimates needed for the existence result.
Their proofs are rather technical and they are given in the Appendix.
The first one gives estimates for the terms in \eqref{cons_w}, \eqref{cons_e}.
\begin{proposition}\label{prop.5}
Any smooth solution to \eqref{cons_w}--\eqref{PC.Sw}, \eqref{ic.1}--\eqref{bc} satisfies
\begin{align}
& \|r_i\|_{L^{\frac{a}{a-1}}(Q_\tf)} \leq C,\quad i=1,\ldots,N, \label{Est.r} \\
& {\| \veb v \|_{L^{\frac{2\beta}{\beta+1}}(Q_\tf)} \leq C}, \label{Est.rhov}\\
& \|\veb J_i\|_{L^2(Q_\tf)}\leq C,\quad i=1,\ldots,N. \label{Est.J} \\
& \|\rho_i\veb v + \veb J_i\|_{L^m(Q_\tf)} \leq C,\quad i=1,\ldots,N. \label{Est.flux.1} \\
 & \|\pa_t(\Phi S\rho_i)\|_{L^{\widetilde{m}}(0,\tf; W^{-1,\widetilde{m}}(\Omega))} \leq C, \; \mbox{ for } \widetilde{m} = \min(m, \frac{a}{a-1}) > 1.
\label{Est.Srhot} \\
& \|\pa_t F(S)\|_{L^\infty(0,\tf; L^1(\Omega))}\leq C,\quad
F(s) \equiv -\int_s^{1/2} s_1 f'(s_1)ds_1 \geq 0, \label{Est.FSt}
\end{align}
where $\gamma$ and $\beta$ were given by \eqref{gamma}, \eqref{hp.beta} and
\[
 1 < {m\equiv \frac{2\beta\gamma}{\beta(2+\gamma) + \gamma} < 2}.
\]
\end{proposition}

The following proposition gives the bound of the total entropy $\Phi S(\rho\eta) + (1-\Phi)(\rho\eta)_s$ and for the entropy flux.
\begin{proposition}\label{prop.6}
Any smooth solution to \eqref{cons_w}--\eqref{PC.Sw}, \eqref{ic.1}--\eqref{bc} satisfies
\begin{align}\label{Est.kappaT}
	\left\|\kappa(T)\nabla\log T\right\|_{L^{\frac{2+3\beta}{1+3\beta}}(Q_\tf)}
	+
	\left\|\sum_{j=1}^N \frac{L_{0j}}{T}\nabla\frac{\mu_j}{T}\right\|_{L^2(Q_\tf)}\leq C,
\end{align}

\begin{equation}
\label{Est.entr}
\|\Phi S(\rho\eta) + (1-\Phi)(\rho\eta)_s\|_{L^{\frac{2\gamma}{\gamma+2}}(Q_\tf)} \leq C.
\end{equation}
\begin{equation}
\label{Est.entrflux}
\exists s>1:\quad 
\left\|(\rho\eta)\veb v -\sum_{i=1}^N\frac{\mu_i}{T}\veb J_i +
\frac{\veb q}{T}\right\|_{L^s(Q_\tf)} \leq C .
\end{equation}
\end{proposition}

Finally, we derive the gradient bound for $\rho$, the bound for $\log (\rho_i/\rho)$.
\begin{proposition}\label{prop.7}
Any smooth solution to \eqref{cons_w}--\eqref{PC.Sw}, \eqref{ic.1}--\eqref{bc} satisfies
\begin{equation}
\label{Est.narho}
\|\nabla [\sqrt{k_r(S)} G(\rho^\gamma)]\|_{L^{a_2}(0,\tf; L^q(\Omega))}\leq C[G],\qquad \forall G\in W^{1,\infty}(\R_+),
\end{equation}
for some $a_2 >1$.
\begin{equation}
\label{est.relchp}
\left\| \log\frac{\rho_i}{\rho} \right\|_{L^2(Q_\tf)}\leq C,\qquad
i=1,\ldots,N.
\end{equation}
\end{proposition}

\section{Proof of Theorem \ref{thm.ws}}\label{sec.pass}

In this section we assume that $(\rhonv,\Tn,\Sn)$ is a sequence of smooth solutions given in Theorem \ref{thm.ws}, and we show that $(\rhonv,\Tn,\Sn)$ converges (up to subsequences) to some variational entropy solution $(\rhov,T,S)$ of \eqref{cons_w}--\eqref{PC.Sw}, \eqref{ic.1}, \eqref{bc} as $n\to \infty$.

\subsection{Convergence results}
  In the following lemma we will collect the strong convergence results needed for passing to the limit in \eqref{weak.1}--\eqref{weak.2}--\eqref{weak.EBE}, when $n \to \infty$. From now on we will use the notation $Q_\tf^\eps = \Omega\times [\eps,\tf]$, where $\eps > 0$. 
  
  Let us denote with $\overline{u^{(n)}}$ a weak $L^1$-limit of the sequence $(u^{(n)})_{n \in \mathbb{N}}$, meaning that $\overline{u^{(n)}} \in L^1(\Omega)$ is defined by
$$ \int_{\Omega}\overline{u^{(n)}}\phi\,dx := \lim_{n\to\infty} \int_{\Omega} u^{(n)}\phi\,dx \quad \mbox{ for all } \; \phi \in L^{\infty}(\Omega). $$
  
 We recall a very useful compactness criterion from \cite[Corollary 10.2]{FeirNov09}. This will be used as the final step to upgrade almost everywhere convergence to strong convergence.
 \begin{proposition}\label{delaValle}
Let the sequence $(f_n)$ be bounded in $L^p(\Omega)$ with $1 < p < \infty$, $|\Omega| < \infty$, and suppose that  $f_n \to f$ almost everywhere in $\Omega$. Then $f_n \to f$ strongly in $L^q(\Omega)$ for all $q$ such that $1 \leq q < p$.
\end{proposition} 
 
In order to prove weak compactness, the Div-Curl lemma \cite[Prop. 3.3]{FeirNov09}, developed by Murat \cite{Murat78} and Tartar \cite{Tartar75}, has been used, which represents an efficient tool for handling compactness in nonlinear problems, where the classical Rellich--Kondraschev argument is not applicable. The Div-Curl lemma ensures that for certain sequences $(\Theta_n)$ with weak limits, nonlinear functional behave well. Roughly, 
\begin{align}
\overline{f(\Theta_n)} = f(\overline{\Theta}_n).
\label{DC-1}
\end{align}
where $f$ is strictly convex (or concave) functions. 
The identity \eqref{DC-1} is a powerful tool because it usually forces the weak limit to agree with the pointwise limit. More precisely, it implies that no oscillations or concentrations are left in the sequence $(\Theta_n)$, see \cite{FeirNov09}, Theorems 10.19, and 10.20.
Therefore, one can conclude 
\[ \Theta_n \to   \overline{\Theta}_n   \; \mbox{a.e. in } \Omega.  \]
Finally, combining this a.e. convergence with the boundedness of $(\Theta_n)$ in some $L^p(\Omega)$, with $ 1 < p < \infty$, Proposition \ref{delaValle} yields
\[ \Theta_n \to   \overline{\Theta}_n   \; \mbox{ strongly  in } L^q(\Omega),\; \forall 1 \leq q < p.
\]
    
  \begin{lemma}\label{str.convergences}
The following results hold:
\begin{equation}
\label{S.pos}
 S>0 \; \mbox{ a.e. in }\; Q_\tf.
\end{equation}
\begin{align}
\Sn &\to S \mbox{ strongly in } L^r(Q_\tf), \forall r < \infty,
\label{S.cnv}\\
\rhon &\to \rho \mbox{ strongly in } L^{\gamma-\eps}(Q_\tf), \label{rho.cnv}\\
\rhon_i &\to\rho_i \mbox{ strongly in } L^{\gamma-\eps}(Q_\tf),\quad
i=1,\ldots,N, \label{rhoi.cnv}
\end{align}
\begin{align}
	\label{cnv.aux}
\Sn\rhon_i\log\rhon_i\to S\rho_i\log\rho_i,\quad
\Sn\rhon_i\to S\rho_i,\quad i=1,\ldots,N,\\
\nonumber
\mbox{strongly in }L^{\gamma-\eps}(Q_\tf),
\end{align}
\begin{align}
	\label{T.cnv}
\log\Tn &\to\log T \mbox{ strongly in } L^{2-\eps}(0,\tf; L^{6-\eps}(\Omega)),\\
\label{T.cnv.2}
\Tn &\to T \mbox{ strongly in } L^{\beta + \frac{2}{3} - \eps}(Q_\tf).
\end{align}
\begin{align}
p^{(n)} & \to p \mbox{ strongly in } L^{4/3}(Q_\tf^\eps) \mbox{ for every } \eps > 0, \label{p.cnv.Qeps} \\
\Sn p^{(n)}& \to S p \mbox{ strongly in } L^1(Q_\tf). \label{p.cnv} 
\end{align}
\end{lemma}

\begin{proof}
This proof will be divided into 5 steps. In Step 1 we will prove \eqref{S.cnv}, \eqref{S.pos}.
 In Step 2 we will prove \eqref{rho.cnv}. Step 3 will be about proving \eqref{rhoi.cnv} and \eqref{cnv.aux}, and Step 4 proves \eqref{T.cnv} and \eqref{T.cnv.2}.   Finally, in Step 5 we prove \eqref{p.cnv.Qeps} and \eqref{p.cnv}.
 
 Let us define preliminarly the following vector fields:
\begin{equation}
	\label{Un.i}
U_i^{(n)} = (\Phi \Sn\rhon_i, \rhon_i\vn + \Jn_i),\qquad
i=1,\ldots,N,
\end{equation}
\begin{equation}
\label{Un}
U^{(n)} = \sum_{i=1}^N U^{(n)}_i = (\Phi \Sn\rhon, \rhon\vn),
\end{equation}
\begin{equation}
\label{Vn}
V^{(n)}[G] = (\sqrt{k_r(\Sn)}G( (\rhon)^{\gamma}), 0,0,0),\qquad
G\in W^{1,\infty}(\R_+),
\end{equation}
\begin{equation}
\label{Zn}
Z^{(n)}[G] = ( G(\Pi\big(\vec\mu^{(n)}/T^{(n)}\big)),0,0,0),\qquad
G\in W^{1,\infty}(\R^N),
\end{equation}
\begin{equation}
\label{Wn}
W^{(n)} = (\Phi\Sn(\rho\eta)^{(n)} + (1-\Phi)(\rho\eta)_s^{(n)},  (\rho\eta)^{(n)} \veb{v}^{(n)} - \sum_{i=1}^N \frac{\mu_i^{(n)}}{\Tn} \veb{J}_i^{(n)} + \frac{\veb{q}^{(n)}}{\Tn} ),
\end{equation}
\begin{equation}
\label{Yn}
Y^{(n)}[G] = ( G(\Tn),0,0,0),\qquad G\in W^{1,\infty}(\R_+).
\end{equation}

{\em \textbf{Step 1: Strong convergence and a.e. positivity of saturation.}}

In this step we show \eqref{S.cnv}.
From \eqref{Est.S} it follows
\begin{align}\label{bounds.fSn}
\|\pa_t f(\Sn)\|_{L^1(0,\tf; L^1(\Omega))} + 
\|f(\Sn)\|_{L^\infty(0,\tf; W^{1,q}(\Omega))}\leq C.
\end{align}
Since $W^{1,q}(\Omega)\hookrightarrow L^q(\Omega)$ compactly and 
$L^q(\Omega)\hookrightarrow L^1(\Omega)$ continuously, the Aubin-Lions lemma \cite{Simon87}
allows us to deduce that, up to subsequences,
\begin{align*}
f(\Sn)\quad\mbox{is strongly convergent in $L^q(Q_\tf)$ as $n\to \infty$.}
\end{align*}
In particular $f(\Sn)$ is a.e.~convergent in $Q_\tf$. Since $f$ is strictly decreasing (and a fortiori one-to-one), it follows that $\Sn$
is a.e.~convergent in $Q_\tf$. Thanks to the uniform $L^\infty(Q_\tf)$ bounds for $\Sn$, we deduce \eqref{S.cnv}.
Positivity of $S^{(n)}$ as well as positivity of the limit $S$ \eqref{S.pos} is direct consequence of Lemma \ref{Lemma_Gamma} and convergence a.e. in $\Qt$. 

\medskip
\noindent

{\em \textbf{Step 2: Strong convergence of the total density.}} 

In this step we show \eqref{rho.cnv}.
Since \eqref{Est.p} holds, it follows that (up to subsequences)
$$
\rhon\rightharpoonup\rho\quad\mbox{weakly in }L^\gamma(Q_\tf).
$$
Let us consider the vector fields $U^{(n)}$, $V^{(n)}[G]$ defined in \eqref{Un}, \eqref{Vn}, with $G\in W^{1,\infty}(\R_+)$ arbitrary. Thanks to \eqref{Est.E.LinfL1.1}, \eqref{Est.flux.1}, we deduce that $U^{(n)}$ is bounded in $L^{m}(Q_\tf)$ with $m>1$, while $V^{(n)}[G]$ is trivially bounded in $L^\infty(Q_\tf)$. On the other hand,
summing \eqref{cons_w} in $i=1,\ldots,N$ yields $\dv_{(t,x)}U^{(n)}=0$,
while the antisymmetric part of the Jacobian of $V^{(n)}[G]$, which we denote by $\curl_{(t,x)} V^{(n)}[G]=\nabla V^{(n)}[G]-\nabla^T V^{(n)}[G]$, satisfies
$$
|\curl_{(t,x)} V^{(n)}[G]|\leq C |\nabla [\sqrt{k_r(\Sn)}G( (\rhon)^{\gamma})]|
$$
and is therefore bounded in $L^1(Q_\tf)$ thanks to \eqref{Est.narho}.
In particular both $\dv_{(t,x)}U^{(n)}$, $\curl_{(t,x)} V^{(n)}[G]$ are relatively compact in $W^{-1,z}(Q_\tf)$ for any $z>1$. Therefore we can apply the Div-Curl Lemma \cite[Prop. 3.3]{FeirNov09} and deduce
\begin{align*}
\overline{U^{(n)}\cdot V^{(n)}[G]} = 
\overline{U^{(n)}}\cdot \overline{V^{(n)}[G]},
\end{align*}
Using \eqref{Un} and \eqref{Vn}, this is equivalent to
\begin{align*}
\overline{\Phi\Sn\rhon \sqrt{k_r(\Sn)}G( (\rhon)^{\gamma})} = 
\overline{\Phi\Sn\rhon}\,
\overline{\sqrt{k_r(\Sn)}G( (\rhon)^{\gamma})} .
\end{align*}
However, since $\Phi>0$ and does not depend on $n$, while $\Sn\to S$
strongly in $L^p(Q_\tf)$ for every $p<\infty$, it follows
\begin{align*}
	S\sqrt{k_r(S)}\,
	\overline{\rhon G( (\rhon)^{\gamma})} = 
	S\sqrt{k_r(S)}\,
	\rho\,\overline{G( (\rhon)^{\gamma})} .
\end{align*}
It holds $S\sqrt{k_r(S)}>0$ because $S>0$ a.e.~in $Q_\tf$.
We deduce
\begin{align}\label{rGr}
\overline{\rhon G( (\rhon)^{\gamma})} = 
\rho\,\overline{G( (\rhon)^{\gamma})} \quad
\mbox{on }Q_\tf , \quad
\forall G\in W^{1,\infty}(\R_+).
\end{align}
Define $G_\infty(s) = (1+s)^{1/\gamma}$.
Let us now choose $G(s) = G_k(s)\equiv \min\{ G_\infty(s), (1+k)^{1/\gamma}  \}$,
for $s\geq 0$, $k\in\N$. The weak lower semicontinuity of the $L^1$-norm and Cauchy-Schwartz inequality allow us to estimate
\begin{align*}
&\|
\overline{\rhon [G_k( (\rhon)^{\gamma})-G_\infty( (\rhon)^{\gamma})] }
\|_{L^1(Q_\tf)}\\ 
&\quad\leq\liminf_{n\to\infty}\|
\rhon [G_k( (\rhon)^{\gamma})-G_\infty( (\rhon)^{\gamma})] 
\|_{L^1(Q_\tf)}\\ 
&\quad\leq\sup_{n\in\N}\|\rhon\|_{L^2(Q_\tf)}
\|G_k( (\rhon)^{\gamma})-G_\infty( (\rhon)^{\gamma})\|_{L^2(Q_\tf)}.
\end{align*}
Thanks to \eqref{Est.p}, it follows (remember that $\gamma>2$):
\begin{align*}
&\|\overline{\rhon [G_k( (\rhon)^{\gamma})-G_\infty( (\rhon)^{\gamma})] }\|_{L^1(Q_\tf)}^2\\
&\quad\leq C\sup_{n\in\N}\int_{Q_\tf\cap\{\rhon > k^{1/\gamma}\}}
(1+(\rhon)^\gamma)^{2/\gamma}dx dt\\
&\quad\leq C k^{2/\gamma-1}\sup_{n\in\N}\int_{Q_\tf\cap\{\rhon > k^{1/\gamma}\}}
(\rhon)^{\gamma-2}(1+(\rhon)^\gamma)^{2/\gamma}dx dt\\
&\quad\leq C k^{2/\gamma-1}\sup_{n\in\N}\|\rhon\|_{L^\gamma(Q_\tf)}^\gamma
\leq C k^{2/\gamma-1} .
\end{align*}
In a similar way one shows that
\begin{align*}
\|\rho\,
\overline{G_k( (\rhon)^{\gamma})-G_\infty( (\rhon)^{\gamma}) }
\|_{L^1(Q_\tf)}^2\leq C k^{2/\gamma-1}.
\end{align*}
From the above inequalities and \eqref{rGr} with $G=G_k$ it follows
\begin{align*}
&\|\overline{\rhon G_\infty( (\rhon)^{\gamma})} - 
\rho\,\overline{G_\infty( (\rhon)^{\gamma})}\|_{L^1(Q_\tf)}\\
&\leq \|
\overline{\rhon [G_k( (\rhon)^{\gamma})-G_\infty( (\rhon)^{\gamma})] }
\|_{L^1(Q_\tf)} \\
&\qquad + \|\rho\,
\overline{G_k( (\rhon)^{\gamma})-G_\infty( (\rhon)^{\gamma}) }
\|_{L^1(Q_\tf)}\leq
C k^{1/\gamma-1/2}\to 0\quad\mbox{as }k\to\infty,
\end{align*}
implying that 
\begin{align}\label{id.rho}
	\overline{\rhon G_\infty((\rhon)^{\gamma})} = 
	\rho\,\overline{G_\infty((\rhon)^{\gamma})} \quad
	\mbox{on }Q_\tf .
\end{align}
Since $s\in\R_+\mapsto G_\infty(s^\gamma) = (1+s^\gamma)^{1/\gamma}\in\R_+$ is strictly increasing and strictly convex, we conclude \cite[Thr.~10.19, Thr.~10.20]{FeirNov09} that, up to subsequences,
$\rhon$ is a.e.~convergent in $Q_\tf$. Since $\rhon$ is bounded in $L^\gamma(Q_\tf)$, Proposition \ref{delaValle} implies that \eqref{rho.cnv} holds. 

\medskip
\noindent
{\em \textbf{Step 3: Strong convergence of partial densities.}}\\
In this step we show \eqref{rhoi.cnv} and \eqref{cnv.aux}.
For every $i=1,\ldots,N$,
the vector field $U_i^{(n)}$ defined in \eqref{Un.i} is bounded in 
$L^m(Q_\tf)$ thanks to \eqref{Est.flux.1}, while its time-space divergence $\dv_{(t,x)}U_i^{(n)} = r_i^{(n)}$ is bounded in $L^{1}(Q_\tf)$ due to \eqref{Est.r}, and a fortiori relatively compact in $W^{-1,p}(Q_\tf)$ for some $p>1$.
On the other hand, for every $G\in W^{1,\infty}(\R^N)$,
the vector field $Z^{(n)}[G]$ defined in \eqref{Zn} is trivially bounded in $L^\infty(Q_\tf)$,
while \eqref{Est.B.2} implies
\begin{align*}
\|\curl_{(t,x)}Z^{(n)}[G]\|_{L^2(Q_\tf)}\leq C[G]\|\nabla \Pi\big(\vec\mu^{(n)}/T^{(n)}\big) \|_{L^2(Q_\tf)}\leq C[G].
\end{align*}
In particular $\curl_{(t,x)}Z^{(n)}[G]$ is relatively compact in $H^{-1}(Q_\tf)$.
Therefore we can apply the Div-Curl Lemma with respect to the vector fields
$U_i^{(n)}$, $Z^{(n)}$ and obtain
\begin{align*}
\overline{U_i^{(n)}\cdot Z^{(n)}} = 
\overline{U_i^{(n)}}\cdot\overline{Z^{(n)}},\qquad i=1,\ldots,N,
\end{align*}
which means
\begin{align*}
\overline{\Phi\Sn\rhon_i G(\Pi\big(\vec\mu^{(n)}/T^{(n)}\big))} = 
\overline{\Phi\Sn\rhon_i}\,
\overline{G(\Pi\big(\vec\mu^{(n)}/T^{(n)} \big))},\qquad i=1,\ldots,N.
\end{align*}
after using \eqref{Un.i} and \eqref{Zn}.
Once again, since $\Phi > 0$ and does not depend on $n$, the strong convergence of $\Sn$ and the weak convergence of $\rhon_i$ imply
\begin{align}\label{riGmuT}
\overline{\rhon_i G(\Pi\big(\vec\mu^{(n)}/T^{(n)}\big))} = 
\rho_i\,
\overline{G(\Pi\big(\vec\mu^{(n)}/T^{(n)}\big))},\quad i=1,\ldots,N,\quad\mbox{on }Q_\tf ,
\end{align}
for every $G\in W^{1,\infty}(\R^N)$.
However, from {\eqref{Proj_1}}  it follows
\begin{equation}
\label{relchp}
(\Pi(\vec\mu^{(n)}/\Tn))_i = \log\rhon_i - \frac{1}{N}\sum_{j=1}^N\log\rhon_j ,\quad i=1,\ldots,N,
\end{equation}
so \eqref{riGmuT} becomes
\begin{align*}
	\overline{\rhon_i G\left(\log\rhon_i - \frac{1}{N}\sum_{j=1}^N\log\rhon_j\right)} = 
	\rho_i\,
	\overline{G\left(\log\rhon_i - \frac{1}{N}\sum_{j=1}^N\log\rhon_j\right)},\quad i=1,\ldots,N,\quad\mbox{on }Q_\tf .
\end{align*}
Note that from $\rhon\to\rho$ strongly in $L^1(Q_\tf)$ it follows that 
$\rhon \to \rho$  almost everywhere (up to a subsequence).
For an arbitrary $k\in\N$ let us define
$ Q_{\tf,k}\equiv\{ (x,t)\in Q_\tf ~:~ \rho(x,t)\geq 1/k \}$.
Now, for any $\epsilon>0$ and for any fixed $k \in \N$, since  $\rhon \to \rho$ 
a.e. and $\rho\geq 1/k$ in $Q_{\tf,k}$, Egorov's theorem ensures the existence of a measurable subset $ E_{\epsilon,k}\subset Q_{\tf,k}$ such that $\rhon\to\rho$  uniformly in $E_{\epsilon,k}$ and 
$ |Q_{\tf,k}\backslash E_{\epsilon,k}|<\epsilon$.
As a result $\rhon\to\rho$  strongly in  $L^\infty(E_{\epsilon,k})$.

We can therefore assume w.l.o.g.~that $\rhon\geq 1/2k$ a.e.~in $E_{\epsilon,k}$, $n\in\N$. Defining $\sigma_i^{(n)} = \rhon_i/\rhon$ on $E_{\epsilon,k}$, for $i=1,\ldots,N$, 
allows us to write
\begin{align*}
\sum_{i=1}^N\overline{\rhon_i G\left(\log\sigma^{(n)}_i - \frac{1}{N}\sum_{j=1}^N\log\sigma^{(n)}_j\right)} = 
\sum_{i=1}^N\rho_i\,
	\overline{G\left(\log\sigma^{(n)}_i - \frac{1}{N}\sum_{j=1}^N\log\sigma^{(n)}_j\right)}
	\quad\mbox{on }E_{\epsilon,k} .
\end{align*}

Fix $i\in\{1,\ldots,N\}$ generic. By choosing $G(\veb u) = G_M(u_i) = \min\{ u_i, M \}$ in the above identity, with $M\in\N$ generic, exploiting \eqref{Est.B.2} and proceeding like in the proof of \eqref{id.rho} one finds out that
\begin{align*}
\sum_{i=1}^N\overline{\rhon_i \left(
	\log\sgn_i - \frac{1}{N}\sum_{j=1}^N\log\sgn_j\right)} = 
\sum_{i=1}^N\rho_i\,
\overline{\left(
	\log\sgn_i - \frac{1}{N}\sum_{j=1}^N\log\sgn_j\right)}\quad
\mbox{on }E_{\epsilon,k},
\end{align*}
which, thanks to \eqref{rho.cnv} and \eqref{est.relchp}, leads to
\begin{align*}
\sum_{i=1}^N\overline{\rhon_i \log\sgn_i} = 
\sum_{i=1}^N\rho_i\,
\overline{\log\sgn_i}\quad \mbox{on }E_{\epsilon,k}.
\end{align*}
Once again, \eqref{rho.cnv} and \eqref{est.relchp} imply
\begin{align*}
	\sum_{i=1}^N\overline{\sgn_i \log\sgn_i} = 
	\sum_{i=1}^N\overline{\sgn_i}\,
	\overline{\log\sgn_i}\quad \mbox{on }E_{\epsilon,k} .
\end{align*}
The fact that $\log$ is strictly monotone and strictly concave allows us to conclude that (up to subsequences)
$\sgn_i = \rhon_i/\rhon$ is a.e.~convergent on $E_{\epsilon,k}$,
which, together with \eqref{rho.cnv}, implies that 
$\rhon_i\to \rho_i$ a.e.~in $E_{\epsilon,k}$.
Since $|Q_{\tf,k}\backslash E_{\epsilon,k}|<\epsilon$ and \eqref{Est.p} holds, we easily deduce that 
$\rhon_i\to \rho_i$ strongly in $L^1(Q_{\tf,k})$ for every $k\in\N$.
Since $0\leq\rho\leq 1/k$ on $Q_\tf\backslash Q_{\tf,k}$, it follows that 
\begin{align*}
\limsup_{n\to\infty}\int_{Q_\tf}|\rhon_i - \rho_i|dx dt &\leq
\limsup_{n\to\infty}\int_{Q_{\tf,k}}|\rhon_i - \rho_i|dx dt +
\limsup_{n\in\N}\int_{Q_\tf\backslash Q_{\tf,k}}|\rhon_i - \rho_i|dx dt\\
&\leq 2 \int_{Q_\tf\backslash Q_{\tf,k}}\rho\, dx dt\leq
\frac{2}{k}|Q_\tf|,
\end{align*}
and therefore $\rhon_i\to \rho_i$ strongly in $L^1(Q_{\tf})$,
for $i=1,\ldots,N$.

Together with \eqref{Est.p}, we conclude \eqref{rhoi.cnv}. Moreover, \eqref{cnv.aux} holds as it is a straightforward consequence of \eqref{S.cnv}, \eqref{rhoi.cnv} and Proposition \ref{delaValle}.

\medskip
\noindent
{\em \textbf{Step 4: Strong convergence of the temperature.}} \\
In this step we show \eqref{T.cnv} and \eqref{T.cnv.2}.
Let us consider the vector fields $W^{(n)}$, $Y^{(n)}$ defined in \eqref{Wn}, \eqref{Yn}. Thanks to \eqref{Est.entr}, \eqref{Est.entrflux}
we deduce that $W^{(n)}$ is bounded in $L^s(Q_\tf)$ for some $s>1$,
while $Y^{(n)}[G]$ is bounded in $L^\infty(Q_\tf)$ for every $G\in W^{1,\infty}(\R_+)$. On the other hand, the time-space divergence
$\dv_{(t,x)}W^{(n)}$ is the right-hand side of \eqref{EBE}, which is bounded in $L^1(Q_\tf)$ thanks to Lemma \ref{lem.ap.est}, while
$$
|\curl_{(t,x)}Y^{(n)}[G]|\leq C[G]|\nabla T^{{(n)}}|
$$
is bounded in 
$L^1(Q_\tf)$ due to \eqref{Est.A}. 

We observe that
\begin{align*}
\nabla \Tn = \chf{(0,1)}(\Tn) \Tn \nabla\log\Tn + \chf{[1,\infty)}(\Tn)\frac{2}{\beta}(\Tn)^{1-\beta/2}\nabla (\Tn)^{\beta/2}
\end{align*}
is bounded in $L^2(Q_T)$ given the uniform bounds for $\nabla\log\Tn$ and $\nabla (\Tn)^{\beta/2}$ in $L^2(Q_T)$ from Lemma \ref{lem.ap.est}.
By Proposition \ref{prop.6}  both 
$\dv_{(t,x)}W^{(n)}$ and $\curl_{(t,x)}Y^{(n)}[G]$ are relatively compact in $W^{-1,r}(Q_\tf)$ for some $r>1$. From the Div-Curl Lemma it follows
\begin{align*}
\overline{W^{(n)}\cdot Y^{(n)}[G]} = 
\overline{W^{(n)}}\cdot \overline{Y^{(n)}[G]}
\end{align*}
which means
\begin{align*}
\overline{(\Phi\Sn(\rho\eta)^{(n)} + (1-\Phi)(\rho\eta)_s^{(n)})G(\Tn)}
=
\overline{(\Phi\Sn(\rho\eta)^{(n)} + (1-\Phi)(\rho\eta)_s^{(n)})}
\overline{G(\Tn)}\\
\mbox{a.e~in }Q_\tf,\quad\forall G\in W^{1,\infty}(\R_+).
\end{align*}
From the definitions \eqref{entropy}, \eqref{Skel.Entr},
bound \eqref{Est.A} and strong convergence relations \eqref{cnv.aux} 
we deduce
\begin{align*}
(c_w\Phi S \rho + c_s(1-\Phi))\overline{(\log\Tn)G(\Tn)} = 
(c_w\Phi S \rho + c_s(1-\Phi))\overline{(\log\Tn)}~
\overline{G(\Tn)},
\end{align*}
which, thanks to \eqref{Phi.pos}, yields
\begin{equation*}
\overline{(\log\Tn)G(\Tn)} = \overline{(\log\Tn)}~ \overline{G(\Tn)}\quad
\mbox{a.e.~in }Q_\tf,\quad
\forall G\in W^{1,\infty}(\R_+).
\end{equation*}
Since \eqref{Est.A} holds, by arguing in a similar way as the derivation of 
\eqref{id.rho} one obtains
\begin{equation}
\label{TGT}
\overline{(\log\Tn)\Tn} = \overline{(\log\Tn)}~ \overline{\Tn}
\quad\mbox{a.e.~in }Q_\tf .
\end{equation}
Once again, the strict monotonicity and strict convexity of the function {$-\log$} 
allows us to conclude that $\Tn$ is (up to subsequences) a.e.~convergent in $Q_\tf$. From this fact, \eqref{Est.A}, \eqref{Est.T} and Sobolev's embedding $H^1(\Omega)\hookrightarrow L^6(\Omega)$ it follows
\eqref{T.cnv} and \eqref{T.cnv.2}.

\medskip
\noindent
{\em \textbf{Step 5: Strong convergence of the pressure.}} \\
Finally, we show \eqref{p.cnv.Qeps} and \eqref{p.cnv}.
Let us note that by \eqref{thp.1}, \eqref{rho.cnv}, \eqref{T.cnv.2} we have $p^{(n)}\to p$ a.e.~in $Q_\tf$.  We note that the strong convergence of the saturation \eqref{S.cnv} and its positivity \eqref{S.pos} will help us to  show \eqref{p.cnv}.
From \eqref{mobility}, \eqref{AJPP.1} and {Lemma \ref{Lemma_Gamma}}
as well as the uniform boundedness of the viscosity $\nu$ it follows
\begin{align*}
	\forall\eps>0,\quad\exists C_\eps>0:\quad
	\|(\Tn)^{-1/2}\nabla p^{(n)}\|_{L^2(Q_\tf^\eps)}\leq C_\eps .
\end{align*}
From the above estimate, \eqref{Est.T} and the fact that $\beta>2$ we obtain via H\uml older's inequality
\begin{align*}
	\forall\eps>0,\quad\exists C_\eps>0:\quad
	\|\nabla p^{(n)}\|_{L^{16/11}(Q_\tf^\eps)}\leq 
	\|(T^{(n)})^{1/2}\|_{L^{16/3}(Q_\tf^\eps)}
	\|(\Tn)^{-1/2}\nabla p^{(n)}\|_{L^{2}(Q_\tf^\eps)} \leq 
	C_\eps .
\end{align*}
Via Poincar\'e Lemma we get
\begin{align*}
	\forall\eps>0,\quad\exists C_\eps>0:\quad
	\left\|p^{(n)} - |\Omega|^{-1}\int_\Omega p^{(n)} dx\right\|_{L^{16/11}(Q_\tf^\eps)}\leq C_\eps .	
\end{align*}
On the other hand, \eqref{Est.E.LinfL1.1} and {Lemma \ref{Lemma_Gamma}} yield
\begin{align*}
	\left|\int_\Omega p^{(n)}(t) dx\right|\leq \frac{1}{s_\eps}
	\int_{\Omega}\Sn(t) |p^{(n)}(t)|dx\leq \frac{C}{s_\eps},\quad
	t\geq \eps,
\end{align*}
which means
\begin{align}\label{Est.p2.a}
	\forall\eps>0,\quad\exists C_\eps>0:\quad
	\|p^{(n)}\|_{L^{16/11}(Q_\tf^\eps)}\leq C_\eps .
\end{align}
Since $p^{(n)}\to p$ a.e.~in $Q_\tf$, 
Proposition \ref{delaValle} implies \eqref{p.cnv.Qeps}.
Given that \eqref{S.cnv}, \eqref{p.cnv.Qeps} hold, it follows
\begin{align*}
	\Sn p^{(n)}\to S p\quad\mbox{strongly in }L^{1}(Q_\tf^\eps),\quad\forall\eps>0.
\end{align*}
However, 
\begin{align*}
	\|\Sn p^{(n)}\|_{L^{1}(0,\eps; L^1(\Omega))}\leq
	\eps\|\Sn p^{(n)}\|_{L^{\infty}(0,\eps; L^1(\Omega))}\leq
	C \eps,
\end{align*}
and a similar estimate holds for $S p$ (e.g.~via Fatou's Lemma).
It follows that
\begin{align*}
	&\limsup_{n\to\infty}\|\Sn p^{(n)} - S p\|_{L^1(Q_\tf)}\\
	&\leq \limsup_{n\to\infty}\|\Sn p^{(n)} - S p\|_{L^1(Q_\tf^\eps)}
	+ \limsup_{n\to\infty}\|\Sn p^{(n)} - S p\|_{L^1(Q_\tf\backslash Q_\tf^\eps)}\\
	&\leq \sup_{n\in\N}\left( 
	\|\Sn p^{(n)}\|_{L^{1}(0,\eps; L^1(\Omega))} + 
	\|S p\|_{L^{1}(0,\eps; L^1(\Omega))}
	\right)\leq C\eps .
\end{align*}
This means that \eqref{p.cnv} holds.
\end{proof}
\subsection{Passing to the limit in \eqref{weak.1}--\eqref{weak.EBE} when $n \to \infty$.} 
In this subsection we will use the previous convergence results for passing to the limit in the variational entropy formulation.
Note that for a smooth solution the Radon measure $\xi$ equals zero.

\medskip

{\em Limit in equation \eqref{weak.1}.}
From 
\eqref{Est.J}, \eqref{Est.Srhot}, \eqref{cnv.aux} it follows
\begin{align*}
\Phi \Sn \rhon_i & \to \Phi S \rho_i \quad \mbox{strongly in }L^1(Q_\tf),\\
\veb J_i^{(n)} & \rightharpoonup \veb J_i\quad\mbox{weakly in }L^2(Q_\tf),
\end{align*}
for $i=1,\ldots,N$. 
Thanks to \eqref{Est.rhov} it holds
\begin{align*}
\vn\rightharpoonup \veb v\quad\mbox{weakly in } L^{\frac{2\beta}{\beta + 1}}(Q_\tf).%   
\end{align*}
Since \eqref{rhoi.cnv} holds and (thanks to \eqref{hp.beta})
$\frac{1}{\gamma} + \frac{\beta + 1}{2\beta} < 1$, it follows
\begin{align*}
\rhon_i\veb v^{(n)}\rightharpoonup \rho_i\veb v\quad\mbox{weakly in 
$L^s(Q_\tf)$ for some $s>1$.}
\end{align*}
We must now identify $\veb v$.
We start by applying Lemma~\ref{Lemma_Gamma}.
Therefore \eqref{vow}, \eqref{mobility} lead to
\begin{align*}
-K\nabla\pn\rightharpoonup \frac{\veb v}{\lambda(S,T)}
\quad\mbox{weakly in }L^{{\frac{2\beta}{\beta + 1}}-\eps_1}(\Omega\times [\eps_2,\tf]),\quad\forall\eps_1,\eps_2>0.
\end{align*}
From \eqref{p.cnv.Qeps} it follows
\begin{align*}
-K\lambda(S,T)\nabla p = \veb v\quad\mbox{a.e.~in }\Omega\times [\eps_2,\tf],\quad\forall\eps_2>0.
\end{align*}
Being $\eps_2>0$ arbitrary and $\veb v\in L^{{\frac{2\beta}{\beta + 1}}}(Q_\tf)$, we deduce that \eqref{vow} holds a.e.~in $Q_\tf$. 
As a consequence of this fact and  \eqref{AJPP.1}, we also deduce
\begin{equation}
\label{wcnv.nap.1}
\frac{\lambda(\Sn,\Tn)^{1/2}}{(\Tn)^{1/2}}\nabla p^{(n)}
\rightharpoonup
\frac{\lambda(S,T)^{1/2}}{T^{1/2}}\nabla p\quad\mbox{weakly in }L^2(Q_\tf).
\end{equation} 

We will now identify the limits of the diffusion fluxes $\veb J_1,\ldots,\veb J_N$.
From \eqref{Est.B.2} it follows
\begin{align*}
	\Pi(\vec\mu^{(n)}/\Tn) \rightharpoonup\vec\zeta
	\quad\mbox{weakly in }L^2(0,\tf; H^1(\Omega)).
\end{align*}
The range of $\Pi:\R^N\to\R^N$ is weakly closed, hence $T\zeta = \Pi\vec\mu$ for some Lebesgue-measurable function $\mu : Q_T\to\R^N$.
Therefore
\begin{align}\label{muT.cnv}
	\Pi(\vec\mu^{(n)}/\Tn) \rightharpoonup
	\Pi(\vec\mu/T)
	\quad\mbox{weakly in }L^2(0,\tf; H^1(\Omega)).
\end{align}

Thanks to \eqref{hp.Ltilde}, \eqref{Ass.L}, it follows, for $i=1,\ldots,N$:
\begin{align}\label{lim.muT}
\sum_{j=1}^N \tilde L_{ij}(\vec\rhon,\Tn)\nabla(\mu_j^{(n)}/\Tn) \rightharpoonup
\sum_{j=1}^N \tilde L_{ij}(\vec\rho,T){\nabla(\mu_j/T)}
\quad\mbox{weakly in }L^2(Q_\tf).
\end{align}
We wish to show that the constitutive relation \eqref{chp.1} holds.
We point out that 
\begin{equation}
\label{rho.0}
\mbox{ for a.e. }(x,t)\in Q_\tf,\;
\rho(x,t)>0\quad\Rightarrow\quad\min_{i=1,\ldots,N}\rho_i(x,t)>0.
\end{equation}
This is a consequence of \eqref{est.relchp}. In fact, since $\rhon_i$ 
is a.e.~convergent in $Q_\tf$, Fatou's Lemma implies
\begin{align*}
\int_{Q_\tf\cap\{\rho>0\}}\log(\rho/\rho_i)dx\leq
\liminf_{n\to\infty}
\int_{Q_\tf\cap\{\rho>0\}}\log(\rhon/\rhon_i)dx\leq C,
\end{align*}
for $i=1,\ldots,N$. This means that $\rho_i>0$ on $Q_\tf\cap\{\rho>0\}$ for 
$i=1,\ldots,N$.

Thanks to the a.e.~convergence of $\rhon_i$, {$T^{(n)}$} and \eqref{rho.0},
 we deduce
\begin{align}\label{lim.muT.2}
\Pi\frac{\vec\mu^{(n)}}{\Tn}\to 
\Pi\frac{\vec\mu}{T}\quad\mbox{a.e.~on }Q_\tf\cap\{\rho>0\} ,\\
\nonumber
\left(\Pi\frac{\vec\mu}{T}\right)_i = 
{\left(\Pi\log\left( \frac{\vec{\rho}}{\rho} \right)\right)_i} \; \mbox{ a.e.~in }\; Q_\tf\cap\{\rho>0\} ,\quad i=1,\ldots,N.
\end{align}
\begin{remark}
\label{VacuumRemark}
We cannot exclude the possibility that $\rho=0$ on a set of positive measure. However, the constitutive relation \eqref{chp.1} is not meaningful in the vacuum region 
$\{\rho=0\}$, and consequently, we are unable to provide an explicit expression for the chemical potential vector 
$\vec{\mu}$ in this set. This limitation is a well-known issue in models describing fluid mixtures; see e.g.~\cite{DDGG20}.
\end{remark}
From \eqref{hp.Ltilde}, \eqref{Est.A}, \eqref{rhoi.cnv}, \eqref{T.cnv} we deduce that,
for $i=1,\ldots,N$,
\begin{align}
\label{Li0T.cnv}
\frac{\tilde L_{i0}(\rhonv,\Tn)}{\Tn}\to
\frac{\tilde L_{i0}(\vec\rho,T)}{T}\quad\mbox{strongly in }
L^p(Q_\tf),\quad \forall p<\infty,\\
\nonumber
\nabla\log\Tn\rightharpoonup\nabla\log T\quad\mbox{weakly in }L^2(Q_\tf).
\end{align}
Since $\frac{\tilde L_{i0}(\rhonv,\Tn)}{\Tn}\nabla\log\Tn$ is bounded in $L^2(Q_\tf)$, we deduce (up to subsequences)
\begin{align*}
\frac{\tilde L_{i0}(\rhonv,\Tn)}{\Tn}\nabla\log\Tn\rightharpoonup
\frac{\tilde L_{i0}(\vec\rho,T)}{T}\nabla\log T\quad\mbox{weakly in }
L^2(Q_\tf),
\end{align*}
for $i=1,\ldots,N$, which means
\begin{align}\label{lim.cross.1}
\tilde L_{i0}(\rhonv,\Tn)\nabla\frac{1}{\Tn}\rightharpoonup
\tilde L_{i0}(\vec\rho,T)\nabla\frac{1}{T}\quad\mbox{weakly in }
L^2(Q_\tf).
\end{align}
From \eqref{lim.muT}, \eqref{lim.muT.2}, \eqref{lim.cross.1} we obtain
\begin{align}
\label{lim.J}
\veb{J}_i &= L_{i0} \nabla \Big(\frac{1}{T}\Big) 
- \sum_{k=1}^{N} L_{ik} \nabla\zeta_k, \quad \mbox{a.e.~on }Q_\tf ,\\
L_{ij} &= \tilde L_{ij}(\vec\rho,T),\quad
L_{i0} = \tilde L_{i0}(\vec\rho,T)\quad
\mbox{a.e.~on }Q_\tf .\nonumber
\end{align}
for $i,j=1,\ldots,N$. 

Let us now focus on the reaction terms.
From \eqref{Est.r} it follows that
\begin{align*}
r_i^{(n)}\rightharpoonup r_i\quad\mbox{weakly in }L^{2}(Q_\tf),
\quad i=1,\ldots,N.
\end{align*}
However, since $r_i^{(n)} = \tilde r_i(\Pi(\vec\mu^{(n)}, \Tn)$, the continuity of $\tilde r_i$ as well as
\eqref{rho.cnv}, \eqref{T.cnv.2}, \eqref{lim.muT.2} we deduce
\begin{equation}
\label{lim.r}
r_i = r_i(\Pi(\vec\mu/T))\;\mbox{ a.e.~in } \; Q_\tf\cap\{\rho>0\}.
\end{equation}
Finally, we point out that the continuous Sobolev embedding $H^{1}(\Omega)\hookrightarrow H^{1/2}(\pa\Omega)$ yields the convergence of the boundary integral:
\begin{align*}
\sum_{k=1}^N b_{ik}\left(\frac{\mu^{(n)}_k}{\Tn} - \frac{\mu_{0,k}}{T_0}  \right)\rightharpoonup
\sum_{k=1}^N b_{ik}\left(\zeta_k - \frac{\mu_{0,k}}{T_0}\right)
\quad\mbox{ weakly in }L^2(\pa\Omega\times (0,\tf)).
\end{align*}
This shows that \eqref{weak.1} holds {in the limit as $n\to\infty$}.

\medskip

{\em Limit in equation \eqref{weak.EBE}.}
Due to Lemma \ref{str.convergences}, equations \eqref{S.cnv}-\eqref{T.cnv} we get that
\[ \Phi \Sn  (\rho\eta)^{(n)}+ (1-\Phi)(\rho\eta)^{(n)}_s \to \Phi S (\rho \eta) + (1-\Phi)(\rho \eta)_s 
\quad\mbox{ strongly in } L^1(Q_\tf). \]
Let us now turn our attention to $\veb q^{(n)}$. Recall from   \eqref{flux.q} that
\begin{align*}
	\frac{\veb q^{(n)}}{\Tn} = 
	-\frac{\kappa(\Tn)}{\Tn}\nabla\Tn 
	+ \sum_{j=1}^N\frac{\tilde L_{0j}(\rhonv,\Tn)}{\Tn}\nabla\frac{\mu_j^{(n)}}{\Tn} .
\end{align*}
From \eqref{Est.kappaT} it follows
\begin{align}\label{q.cnv.1}
\frac{\veb q^{(n)}}{\Tn}\rightharpoonup\tilde{\veb q}\quad
\mbox{weakly in }L^{\frac{2+3\beta}{1+3\beta}}(Q_\tf).
\end{align}
From \eqref{hp.Ltilde}, \eqref{muT.cnv}, \eqref{Li0T.cnv} and 
the boundedness of $\sum_{j=1}^N\frac{\tilde L_{0j}(\rhonv,\Tn)}{\Tn}\nabla\frac{\mu_j^{(n)}}{\Tn}$
in $L^2(Q_\tf)$ we deduce
\begin{align}
\sum_{j=1}^N
\frac{\tilde L_{0j}(\rhonv,\Tn)}{\Tn}\nabla\frac{\mu_j^{(n)}}{\Tn}
\rightharpoonup
\sum_{j=1}^N\frac{\tilde L_{0j}(\vec\rho,T)}{T}\nabla
	\frac{\mu_j}{T}
\quad\mbox{weakly in }L^2(Q_\tf).
\label{L0j_cvg}
\end{align}
On the other hand, \eqref{Est.kappaT}, \eqref{T.cnv} imply
\begin{align}
\frac{\kappa(\Tn)}{\Tn}\nabla\Tn \rightharpoonup
\frac{\kappa(T)}{T}\nabla T \quad\mbox{weakly in }L^{\frac{2+3\beta}{1+3\beta}}(Q_\tf).
\label{kappaT_cvg}
\end{align}
We conclude 
\begin{equation}
\label{qtilde}
\tilde{\veb q} = 
-\frac{\kappa(T)}{T}\nabla T 
+ \sum_{j=1}^N\frac{\tilde L_{0j}(\vec\rho,T)}{T}\nabla\frac{\mu_j}{T} 
=\frac{\veb q}{T},\quad
\mbox{a.e.~in }Q_\tf.
\end{equation}
From \eqref{Est.muJ}, \eqref{lim.muT.2} and \eqref{lim.J}  it follows
\begin{align*}
-\sum_{i=1}^N\frac{\mu_i^{(n)}}{\Tn}\veb J_i^{(n)}\rightharpoonup
\Xi\quad\mbox{weakly in }L^{\frac{2a}{a+2}}(Q_\tf),\quad
\Xi = -\sum_{i=1}^N\frac{\mu_i}{T}\veb J_i\quad\mbox{a.e.~in }Q_\tf\cap\{\rho>0\}.
\end{align*}
Finally, consider the quantity
\begin{align*}
\xi^{(n)}
&=\frac{K}{\Tn}\lambda(\Sn,\Tn)|\nabla p^{(n)}|^2 +
\sum_{i,j=1}^N L_{ij}^{(n)} \nabla \big( \frac{\mu_i^{(n)}}{\Tn} \big) \cdot \nabla \big( \frac{\mu_j^{(n)}}{\Tn} \big) + 
{\kappa(T^{(n)}) \big| \nabla \log\Tn  \big|^2 }  \nonumber\\ 
&\qquad - \Phi \frac{1}{\Tn} f'(\Sn) \big( \partial_t \Sn \big)^2
-\sum_{i=1}^N r_i^{(n)}\frac{\mu_i^{(n)}}{\Tn}\\
&-\frac{K}{T}\lambda(S,T)|\nabla p|^2 
-\sum_{i,j=1}^N L_{ij} \nabla \big( \frac{\mu_i}{T} \big) 
\cdot \nabla \big( \frac{\mu_j}{T} \big)  
- {\kappa(T) \big| \nabla \log T \big|^2 } \nonumber\\
&\qquad +\Phi \frac{1}{T} f'(S) \big( \partial_t S \big)^2
+\sum_{i=1}^N r_i\frac{\mu_i}{T}.
\end{align*}
By Lemma \ref{lem.ap.est} it is clear that $\xi^{(n)}$ is bounded in $L^1(Q_\tf)$ and therefore
also in $\mathcal M(\overline{Q_\tf})$, leading to
$\xi^{(n)}\rightharpoonup^*\xi$ weakly* in $\mathcal M(\overline{Q_\tf})$. 

We will prove that $\xi$ is a nonnegative measure.
Let $\varphi\in C^0(\overline{Q_\tf})$, $\varphi\geq 0$
in $Q_\tf$.	
From \eqref{wcnv.nap.1} and the weak lower semicontinuity of 
the $L^2(Q_\tf)$-norm we deduce
\begin{align}\label{Fatou.1}
\liminf_{n\to\infty}\int_{Q_\tf}\frac{K}{\Tn}\lambda(\Sn,\Tn)|\nabla p^{(n)}|^2 \varphi dx dt \geq
\int_{Q_\tf}\frac{K}{T}\lambda(S,T)|\nabla p|^2 
\varphi dx dt.
\end{align}
In the same way, from \eqref{L0j_cvg} and \eqref{kappaT_cvg} we get
\begin{align}
\label{Fatou.3}
\liminf_{n\to\infty}\int_{Q_\tf}
\sum_{i,j=1}^N L_{ij}^{(n)} \nabla \big( \frac{\mu_i^{(n)}}{\Tn} \big) \cdot \nabla \big( \frac{\mu_j^{(n)}}{\Tn} \big) \varphi dx dt &\geq 
\int_{Q_\tf}
\sum_{i,j=1}^N L_{ij} \nabla \big( \frac{\mu_i}{T} \big) \cdot \nabla \big( \frac{\mu_j}{T} \big) \varphi dx dt,\\
\label{Fatou.4}
\liminf_{n\to\infty}\int_{Q_\tf}
 { \kappa(T^{(n)}) \big| \nabla \log\Tn \big|^2}
\varphi dx dt &\geq \int_{Q_\tf}
{ \kappa(T) \big| \nabla \log T \big|^2 }
\varphi dx dt.
\end{align}

Furthermore, from \eqref{Est.S} it follows that
\begin{align*}
\frac{1}{(\Tn)^{1/2}} \sqrt{-f'(\Sn)} \partial_t \Sn
\rightharpoonup
\omega\quad\mbox{weakly in }L^2(Q_\tf).
\end{align*}
Since \eqref{T.cnv.2} holds we deduce
\begin{align*}
\sqrt{-f'(\Sn)} \partial_t \Sn
\rightharpoonup
\sqrt{T}\omega\quad\mbox{weakly in }L^1(Q_\tf).
\end{align*}
We will now identify the weak limit $\omega$.
Let $\phi\in C^\infty_c(Q_\tf)$ arbitrary, and let us consider
\begin{align*}
\int_{Q_\tf} \sqrt{-f'(\Sn)} \partial_t \Sn \phi \, dx dt & =\int_{Q_\tf}\partial_t F(\Sn) \phi \, dx dt
= -\int_{Q_\tf} F(\Sn)  \partial_t \phi \, dx dt  
\end{align*}
where $F(s) = \int_0^s   \sqrt{-f'(\sigma)}\, d\sigma$ for $0\leq s< 1$. Cauchy-Schwartz inequality and the fundamental theorem of calculus yield
$$
F(s)\leq \sqrt{s}\sqrt{f(0)-f(s)}\quad 0\leq s<1.
$$
From \eqref{bounds.fSn}, the fact that $0\leq S^{(n)}\leq 1$ and the above relation it follows that $F(S^{(n)})$ is bounded in $L^{2q}(Q_\tf)$. Since $S^{(n)}\to S$ a.e.~in $Q_\tf$ and $S<1$ a.e.~in $Q_\tf$ (given \eqref{bounds.fSn}), we deduce by Proposition \ref{delaValle} that $F(S^{(n)})\to F(S)$ strongly in $L^1(Q_\tf)$. We deduce
$$
\int_{Q_\tf} \sqrt{-f'(\Sn)} \partial_t \Sn \phi \, dx dt\to 
-\int_{Q_\tf} F(S) \partial_t \phi \, dx dt = 
\int_{Q_\tf} \sqrt{-f'(S)} \partial_t S \phi \, dx dt
$$
hence $\sqrt{T}\omega = \sqrt{-f'(S)} \partial_t S$. It follows
\begin{align*}
\frac{1}{(\Tn)^{1/2}} \sqrt{-f'(\Sn)} \partial_t \Sn
\rightharpoonup
\frac{1}{T^{1/2}} \sqrt{-f'(S)} \partial_t S
\quad\mbox{weakly in }L^2(Q_\tf).
\end{align*}
Once again, the above relation and the weak lower semicontinuity of the $L^2(Q_\tf)$ norm lead to
\begin{align}\label{Fatou.2}
\liminf_{n\to\infty}\int_{Q_\tf}
- \Phi \frac{1}{\Tn} f'(\Sn) \big( \partial_t \Sn \big)^2 
\varphi dx dt \geq
\int_{Q_\tf} 
-\Phi \frac{1}{T} f'(S) \big( \partial_t S \big)^2 
\varphi dx dt .
\end{align}
Finally, assumption \eqref{Add_Fatou.5}
implies that
\begin{align}
\label{Fatou.5}
\liminf_{n\to\infty}\int_{Q_\tf}
-\sum_{i=1}^N r_i^{(n)}\frac{\mu_i^{(n)}}{\Tn} \varphi dx dt
&\geq\int_{Q_\tf}
-\sum_{i=1}^N r_i\frac{\mu_i}{T} \varphi dx dt.
\end{align}
From \eqref{Fatou.1}--\eqref{Fatou.5} and the definition of $\xi$ we conclude
\begin{align*}
\langle\xi,\varphi\rangle = 
 \lim_{n\to\infty}\langle\xi^{(n)},\varphi\rangle\geq 0.
\end{align*}
This means that $\xi$ is nonnegative. Therefore \eqref{weak.EBE} holds.

\medskip

{\em Limit in equation \eqref{weak.2}.}
We will now prove that \eqref{weak.2} holds in the limit $n\to\infty$. 
By using \eqref{Est.A} and  \eqref{Est.T} one can prove, as in  Lemma \ref{lema-TT} that
$ \|T\|_{L^2(0,\tf; H^1(\Omega))} \leq C$.
From  \eqref{T.cnv.2} it follows that
$\Tn\rightharpoonup T$ weakly in $L^2(0,\tf; H^1(\Omega))$.
Thanks to the Sobolev embedding $H^1(\Omega)\hookrightarrow H^{1/2}(\pa\Omega)$ this implies that
$\Tn\rightharpoonup T$ weakly in $L^2(0,\tf; H^{1/2}(\pa\Omega))$. In particular
\begin{align*}
\int_0^t\int_{\pa\Omega}(\Tn - T_0)d\sigma dt'\to
\int_0^t\int_{\pa\Omega}(T - T_0)d\sigma dt',\quad n\to\infty .
\end{align*}
Furthermore $E_s(\Tn) = c_s\Tn\to c_s T = E_s(T)$ strongly in $L^2(Q_\tf)$ thanks to 
\eqref{Skel.Ener}, \eqref{T.cnv.2}, while the term (recall \eqref{def.Eint})
\begin{align*}
E_{int}(\Sn) = \int_{\Sn}^1 P_c(s) ds	
\end{align*}
can be estimated from \eqref{hp.Pc.bound} and \eqref{PC.Sw.Est1}  
through a similar argument as in \eqref{Est.SPc}, implying that
\begin{align*}
E_{int}(\Sn)\to E_{int}(S)\quad\mbox{strongly in }L^1(Q_\tf).
\end{align*}
Therefore we must now only show that the term (recall \eqref{eng.1})
\begin{align*}
\Sn (\rho e)^{(n)} = \Sn  \left( (\rhon)^\gamma + c_w\rhon \Tn + p_{at} \right)
\end{align*}
is strongly convergent in $L^1(Q_\tf)$. However, from \eqref{thp.1} we obtain
\begin{align*}
(\gamma-1)\Sn (\rho e)^{(n)} - \Sn p^{(n)} = 
[c_w(\gamma-1)-1]\Sn\rhon\Tn + \gamma {\Sn}p_{at} ,
\end{align*}
which, thanks to \eqref{S.cnv}, \eqref{rho.cnv}, \eqref{T.cnv.2}, \eqref{p.cnv}, leads to
\begin{align*}
(\gamma-1)\Sn (\rho e)^{(n)} \to S p + 
[c_w(\gamma-1)-1]S\rho T + \gamma {S}p_{at} = 
(\gamma-1)S (\rho e)\\
\mbox{strongly in }L^1(Q_\tf).
\end{align*}
We conclude that \eqref{weak.2} holds {in the limit  $n\to \infty$}.
\medskip

 {\em Limit in equation \eqref{PC.Sw}. We will now show that the saturation balance equation \eqref{PC.Sw} holds in the limit $n\to\infty$. }

We have by assumption
\begin{align*}
	\pa_t f(\Sn) + P_c(\Sn) + p^{(n)} = 0,\quad t>0,\quad\mbox{a.e.~in }\Omega .
\end{align*}
Multiplying the equation times $\Sn$ leads to
\begin{align*}
	\pa_t F(\Sn) + \Sn P_c(\Sn) + \Sn p^{(n)} = 0,\quad t>0,\quad\mbox{a.e.~in }\Omega ,
\end{align*}
where
 $F(S) = -\int_S^{1/2} s_1 f'(s_1)d s_1$.
Integrating the above equation against a test function $\varphi\in C^1_c(Q_\tf)$ yields
\begin{align}\label{eq.F}
	-\int_0^\tf\int_\Omega F(\Sn)\pa_t\varphi dx dt 
	+ \int_0^\tf\int_\Omega \left(\Sn P_c(\Sn) + \Sn p^{(n)} \right)\varphi dx dt = 0.
\end{align}
Since
 {$F(S) =  -f(1/2)/2 + Sf(S) + \int_S^{1/2} f(s_1)d s_1$} 
 and $f\in C^0([0,1))$, then also $F\in C^0([0,1))$. Furthermore, note that
 $| F(S) | \leq | S - 1/2 | \cdot | f(S) - f(1/2) | \leq C(1+|f(S)|)$.
 Therefore from \eqref{S.cnv} we deduce
$F(\Sn)\to F(S)$ strongly in $L^{q-\eps}(Q_\tf)$ for every $\eps>0$, with $q$ taken from \eqref{Est.S}.
On the other hand, \eqref{hp.Pc.bound} and \eqref{PC.Sw.Est1} imply
\begin{align}\label{Est.SPc}
	\begin{cases}
		\|\Sn P_c(\Sn)\|_{L^{\infty}(Q_\tf)} \leq
		C, & k_p\leq 1,\\
		\|\Sn P_c(\Sn)\|_{L^{\frac{k_p}{k_p-1}}(Q_\tf)} \leq
		C\| P_c(\Sn)^{1-1/k_p} \|_{L^{\frac{k_p}{k_p-1}}(Q_\tf)} 
		\leq C, & k_p>1
	\end{cases} .
\end{align}
Since {$k_p/(k_p-1)>1$} and \eqref{S.cnv} holds, we deduce that 
$\Sn P_c(\Sn)\to S P_c(S)$ strongly in $L^1(Q_\tf)$.
Using \eqref{S.cnv} and \eqref{p.cnv} we can take the limit $n\to\infty$ in \eqref{eq.F} and get
\begin{align*}
	-\int_0^\tf\int_\Omega F(S)\pa_t\varphi dx dt 
	+ \int_0^\tf\int_\Omega \left(S P_c(S) + S p \right)\varphi dx dt = 0,\quad\forall\varphi\in C^1_c(Q_\tf),
\end{align*}
which is the weak formulation of
\begin{align*}
	\pa_t F(S) + S P_c(S) + S p = 0,\quad t>0,\quad\mbox{a.e.~in }\Omega .
\end{align*}
Since $F'(S) = S f'(S)$ and $S>0$ a.e.~in $Q_\tf$, dividing the above equation times $S$ yields \eqref{PC.Sw}.
This finishes the proof of Theorem \ref{thm.ws}.

%--------------------------------------------------------------------------------------------------------------%

\section{Appendix}
For the convenience of the reader, 
we present here some proofs and results which are rather technical, but are nevertheless needed for completeness. 

\paragraph*{{\bf Proof of Lemma~\ref{lema-TT}}}

Let $1 \leq r < \beta/2$ be arbitrary.  In this case we observe that
\begin{align*}
	\nabla T^r =  \chf{(0,1)}(T) r T^r \nabla\log T + \chf{[1,\infty)}(T)\frac{2r}{\beta}T^{r-\beta/2}\nabla T^{\beta/2}
\end{align*}
is bounded in $L^2(Q_\tf)$ given the uniform bounds for $\nabla\log T$ and $\nabla T^{\beta/2}$ in $L^2(Q_\tf)$. If $T^r \in L^1(\Qt)$ then using Poincar\'e's Lemma we can estimate $T^r$ in $L^2(0,\tf; H^1(\Omega))$ and therefore (via Sobolev embedding) in $L^2(0,\tf; L^6(\Omega))$. 
More precisely, we conclude that $\forall r \in [1,\beta/2)$ one has
\begin{equation}
	\| T\|_{L^{2r}(0,\tf;L^{6r}(\Omega))}^r \leq C\Big( 
	\| \nabla \log T\|_{L^{2}(\Qt)} + 	\| \nabla T^{\beta/2} \|_{L^{2}(\Qt)} + 
		\| T\|_{L^{2r}(0,\tf;L^{r}(\Omega))}^r \Big).
		\label{Ap.Pom.Int.1}
\end{equation}
Standard interpolation argument for $L^p$ spaces gives
\begin{equation*}
	\|T\|_{L^{2r_1}(0,\tf; L^{r_1}(\Omega))}^{r_1} \leq 
    \| T\|_{L^{\infty}(0,\tf; L^1(\Omega))}^{r_1-r} \|T\|_{L^{2r}(0,\tf; L^{6r}(\Omega))}^r, \quad 
	r_1 = r + \frac{5}{6}.
\end{equation*}
From \eqref{Ap.Pom.Int.1} we get the following recursion:
\begin{equation} 
\begin{split}
	&\|T\|_{L^{2r_1}(0,\tf; L^{r_1}(\Omega))}^{r_1} \\
  &\qquad  \leq 
 C \| T\|_{L^{\infty}(0,\tf; L^1(\Omega))}^{5/6} 
 \Big( 
	\| \nabla \log T\|_{L^{2}(\Qt)} + 	\| \nabla T^{\beta/2} \|_{L^{2}(\Qt)} + 
		\| T\|_{L^{2r}(0,\tf;L^{r}(\Omega))}^r \Big).
  \end{split}
	\label{Ap.Pom.Int.2}
\end{equation}
Iterating \eqref{Ap.Pom.Int.2}  from $r=1$ until $r < \beta/2$ finally yields the bound on $T^{\beta/2}$ in $L^2(0,\tf; L^1(\Omega))$. Then, the statement follows by an application of Poincar\'e's Lemma .
\paragraph*{{\bf Proof of Proposition \ref{prop.5}}}

Estimate \eqref{Est.r} follows directly from \eqref{React_1} and $\eqref{Add_Psi}_2$ and apriori estimate \eqref{Est.B.2}.
Next, since \eqref{vow}, \eqref{mobility} hold and $\nu$ is uniformly positive, we deduce
\begin{align*}
	|\veb v| &\leq C \sqrt{\lambda(S,T)T}\, \sqrt{\frac{\lambda(S,T)}{T}}|\nabla p| 
	\leq C  T^{1/2}\, \sqrt{\frac{\lambda(S,T)}{T}}|\nabla p|.
\end{align*}
From \eqref{hp.kr} and H\"older's inequality
(notice that $ \frac{1 + \beta}{2\beta}= \frac{1}{2\beta} + \frac{1}{2}$) we get
\begin{align*}
	\|\veb v \|_{ L^{\frac{2\beta}{1+ \beta}}(Q_\tf)}
	\leq 	C 	\|\sqrt T\|_{L^{2\beta}(Q_\tf)}
	\left\|	\sqrt{\frac{\lambda(S,T)}{T}}\nabla p\right\|_{L^2(Q_\tf)} = 
	C \| T^{\frac{\beta}{2}}\|_{L^{2}(Q_\tf)}^{\frac{1}{\beta}} \left\|	\sqrt{\frac{\lambda(S,T)}{T}}\nabla p\right\|_{L^2(Q_\tf)} .
\end{align*}
From this estimates and \eqref{Est.T}, \eqref{AJPP.1} we obtain \eqref{Est.rhov}. Furthermore, 
we have
\begin{align}\label{Est.aux.rhov}
 \| \rho \veb v\|_{L^m(Q_\tf)}	\leq \| \rho\|_{L^{\gamma}(Q_\tf)} \|\veb v \|_{ L^{\frac{2\beta}{1+ \beta}}(Q_\tf)},
 \quad m = \frac{2\beta\gamma}{\beta(2+\gamma)+\gamma},
\end{align} 
where $m>1$ due to \eqref{hp.beta}.

Let us now estimate $\veb J_i$. From \eqref{flux.J}, 
\eqref{hp.Ltilde} and \eqref{Ass.L} it follows
\begin{align*}
|\veb J_i|\leq C(|\nabla\log T| + |\nabla\Pi(\vec {\mu}/T)|),\quad i=1,\ldots,N,
\end{align*}
so from \eqref{Est.A}, \eqref{Est.B.2} we deduce \eqref{Est.J}.
From \eqref{cons_w}, \eqref{Est.aux.rhov}, \eqref{Est.J} we easily conclude
that \eqref{Est.flux.1} and \eqref{Est.Srhot} hold.
From \eqref{Est.E.LinfL1.1} $S p$ is bounded in $L^\infty(0,\tf; L^1(\Omega))$. As a consequence, multiplying \eqref{PC.Sw} by $S$ and taking the $L^\infty(0,\tf; L^1(\Omega))$
norm leads to \eqref{Est.FSt}. 

\paragraph*{{\bf Proof of Proposition \ref{prop.6}}}

From \eqref{entropy} it follows
\begin{align*}
\|S(\rho\eta)\|_{L^{\frac{2\gamma}{\gamma+2}}(Q_\tf)} \leq 
\sum_{i=1}^N\|S\rho_i\log\rho_i\|_{L^{\frac{2\gamma}{\gamma+2}}(Q_\tf)}
+ c_w\|S\rho (\log T + 1)\|_{L^{\frac{2\gamma}{\gamma+2}}(Q_\tf)}.
\end{align*} 
However, for $\delta\in (0,\gamma/2]$, since $S\leq S^{(1+\delta)/\gamma}$ and
$|\rho_i\log\rho_i|\leq C(1+\rho^{1+\delta})$, it holds
\begin{align*}
\sum_{i=1}^N\|S\rho_i\log\rho_i\|_{L^{\frac{2\gamma}{\gamma+2}}(Q_\tf)}
&\leq C + C\|S^\frac{1+\delta}{\gamma}\rho^{1+\delta}\|_{L^{\frac{2\gamma}{\gamma+2}}(Q_\tf)}\\
& = C + C \|S^{1/\gamma}\rho\|_{L^{\frac{2\gamma(1+\delta)}{\gamma+2}}(Q_\tf)}^{
	1+\delta}.
\end{align*}
Since $\gamma>2$ and $0<\delta\leq \gamma/2$, from \eqref{Est.E.LinfL1.1} it follows
\begin{align*}
	\sum_{i=1}^N\|S\rho_i\log\rho_i\|_{L^{\frac{2\gamma}{\gamma+2}}(Q_\tf)}
	&\leq C .
\end{align*}
Moreover, H\uml older's inequality yields
\begin{align*}
\|S\rho (\log T + 1)\|_{L^{\frac{2\gamma}{\gamma+2}}(Q_\tf)}
\leq \|S\rho\|_{L^\gamma(Q_\tf)}\|\log T + 1\|_{L^2(Q_\tf)}\leq C
\end{align*}
thanks to \eqref{Est.E.LinfL1.1}, \eqref{Est.A}. We conclude
\begin{equation}
\label{Est.rhoeta}
\|S(\rho\eta)\|_{L^{\frac{2\gamma}{\gamma+2}}(Q_\tf)} \leq C.
\end{equation}
From \eqref{Skel.Entr}, \eqref{Est.A} it follows that $(\rho\eta)_s$ is bounded in $L^2(Q_\tf)$, so we get \eqref{Est.entr}.

We will now find an estimate for the entropy flux.
We begin by considering 
\begin{align*}
(\rho\eta)\veb v = 
K\left(\sum_{i=1}^N\rho_i\log\rho_i - c_w\rho (\log T + 1)\right)
\lambda(S,T) \nabla p ,
\end{align*}
where the above equality holds thanks to \eqref{entropy}, \eqref{vow}.
Let $s\in\R$ be such that
\begin{equation}
\label{def.s}
s>1,\qquad
\frac{1}{\gamma} + \frac{1}{2\beta} + \frac{1}{2} < \frac{1}{s} .
\end{equation}
The above definition makes sense since \eqref{hp.beta} holds.
It follows via H\uml older's inequality
\begin{align*}
\|(\rho\eta)\veb v\|_{L^s(Q_\tf)} \leq 
C\sum_{i=1}^N\|\rho_i (\log\rho_i) \sqrt{\lambda(S,T)T}\|_{L^{\frac{2s}{2-s}}(Q_\tf)}
\left\|\sqrt\frac{\lambda(S,T)}{T}\nabla p\right\|_{L^2(Q_\tf)}\\
+C\|\rho (\log T + 1)\sqrt{\lambda(S,T)T}\|_{L^{\frac{2s}{2-s}}(Q_\tf)}
\left\|\sqrt\frac{\lambda(S,T)}{T}\nabla p\right\|_{L^2(Q_\tf)} .
\end{align*}
However, since $\mu$ is uniformly positive and \eqref{AJPP.1} holds,
we get
\begin{align*}
\|(\rho\eta)\veb v\|_{L^s(Q_\tf)} \leq 
	C\sum_{i=1}^N\|\rho_i (\log\rho_i) \sqrt{k_r(S)T}\|_{L^{\frac{2s}{2-s}}(Q_\tf)}\\
	+C\|\rho (\log T + 1)\sqrt{k_r(S)T}\|_{L^{\frac{2s}{2-s}}(Q_\tf)} .
\end{align*}
Now, since for every $\delta>0$ there exists $C_\delta>0$ such that
$x|\log x| \leq C_\delta(1+x^{1+\delta})$ for $x>0$, H\uml older's inequality and \eqref{def.s} allow us to state
\begin{align*}
\|(\rho\eta)\veb v\|_{L^s(Q_\tf)} \leq 
C_\delta\sum_{i=1}^N\|(1+\rho_i^{1+\delta}) \sqrt{k_r(S)} \, T^{1/2}\|_{L^{\frac{2s}{2-s}}(Q_\tf)}\\
+C_\delta\|\rho\sqrt{k_r(S)} (1+T^{(1+\delta)/2}) \|_{L^{\frac{2s}{2-s}}(Q_\tf)}\\
\leq C_\delta\sum_{i=1}^N\| (1+\rho_i^{1+\delta}) \sqrt{k_r(S)} \|_{L^{\gamma/(1+\delta)}(Q_\tf)}
\|T^{1/2}\|_{L^{2\beta}(Q_\tf)} \\
+C_\delta\| \rho\sqrt{k_r(S)}\|_{L^\gamma(Q_\tf)}
\| (1+T^{(1+\delta)/2}) \|_{L^{2\beta/(1+\delta)}(Q_\tf)},
\end{align*}
for some $\delta>0$ small enough. Assumption {\bf (H2)} and bounds \eqref{Est.p}, \eqref{Est.T} allow us to conclude
\begin{equation}
\label{Est.rhoetav}
\exists s>1:\quad
\|(\rho\eta)\veb v\|_{L^s(Q_\tf)} \leq C.
\end{equation}
Let us then consider
\begin{align*}
-\sum_{i=1}^N\frac{\mu_i}{T}\veb J_i = -\sum_{i=1}^N\frac{\mu_i}{T}L_{i0}\nabla\frac{1}{T}
+ \sum_{i,j=1}^N\frac{\mu_i}{T}L_{ij}\nabla\frac{\mu_j}{T},
\end{align*}
where the above equality comes from \eqref{flux.J}. Since \eqref{hp.Ltilde} holds and
$L_{ij}$ is symmetric and positive semidefinite, we obtain via Cauchy-Schwartz and using \eqref{Ass.L} and \eqref{Li0.Sum} that
\begin{align*}
\left|-\sum_{i=1}^N\frac{\mu_i}{T}\veb J_i \right| &\leq
C\left|\Pi\frac{\vec\mu}{T}\right| |\nabla\log T|
+ C\left( \sum_{i,j=1}^N L_{ij}\frac{\mu_i\mu_j}{T^2} \right)^{\frac{1}{2}}
\left( \sum_{i,j=1}^N L_{ij}\frac{\nabla\mu_i\cdot\nabla\mu_j}{T^2} \right)^{\frac{1}{2}}\\
&\leq C\left|\Pi\frac{\vec\mu}{T}\right|\left(
|\nabla\log T| + \left|\nabla\Pi\frac{\vec\mu}{T}\right|
\right).
\end{align*}
Given that \eqref{Est.A}, \eqref{Est.B.2} hold, we obtain
\begin{equation}
	\label{Est.muJ}
	\left\|\sum_{i=1}^N\frac{\mu_i}{T}\veb J_i \right\|_{L^{\frac{2a}{a+2}}(Q_\tf)}\leq C.
\end{equation}
Finally, let us consider 
\begin{align*}
\frac{\veb q}{T} = -\kappa(T)\nabla\log T + 
\sum_{j=1}^N \frac{L_{0j}}{T}\nabla\frac{\mu_j}{T} ,
\end{align*}
where the above equality comes from \eqref{flux.q}, \eqref{heat_cond}.
It follows from \eqref{kappa},  that
\begin{align*}
\left|\kappa(T)\nabla\log T\right| \leq C|\nabla\log T| + C T^{\beta/2}|\nabla T^{\beta/2}|,
\end{align*}
while \eqref{hp.Ltilde} and \eqref{Li0.Sum} imply
\begin{align*}
\left|\sum_{j=1}^N \frac{L_{0j}}{T}\nabla\frac{\mu_j}{T}\right|\leq
C \left|\nabla\Pi\frac{\vec\mu}{T}\right|.
\end{align*}
From \eqref{Est.A}--\eqref{Est.B.2} and H\uml older's inequality we conclude
\begin{align*}
\left\|\kappa(T)\nabla\log T\right\|_{L^{\frac{2+3\beta}{1+3\beta}}(Q_\tf)}
+
\left\|\sum_{j=1}^N \frac{L_{0j}}{T}\nabla\frac{\mu_j}{T}\right\|_{L^2(Q_\tf)}\leq C,
\end{align*}
which proves \eqref{Est.kappaT} and it leads to
\begin{equation}
\label{Est.qT}
\left\|\frac{\veb q}{T}\right\|_{L^{\frac{2+3\beta}{1+3\beta}}(Q_\tf)}
\leq C .
\end{equation}
Putting \eqref{Est.rhoetav}, \eqref{Est.muJ}, \eqref{Est.qT} together
allows us to obtain the estimate for the entropy flux \eqref{Est.entrflux}.
\paragraph*{{\bf Proof of Proposition \ref{prop.7}}}
Let us first find an estimate for $\nabla\rho^\gamma$. 
Eq.~\eqref{thp.1} yields:
\begin{align*}
	\nabla p 
	= (T + \gamma(\gamma-1)\rho^{\gamma-1})\nabla\rho + \rho\nabla T ,
\end{align*}
which implies
\begin{align*}
	|\nabla\rho^\gamma|\leq |(T + \gamma(\gamma-1)\rho^{\gamma-1})\nabla\rho|\leq
	\rho |\nabla T| + |\nabla p| .
\end{align*}
From \eqref{hp.kr} and the uniform boundedness of $\nu$ it follows
\begin{align*}
	\sqrt{k_r(S)}|\nabla\rho^\gamma|\leq C \Big( \rho S^{1/\gamma}|\nabla T|
	+ T^{1/2}\sqrt\frac{\lambda(S,T)}{T} |\nabla p|\Big).
\end{align*}
From \eqref{Est.E.LinfL1.1}, \eqref{Est.A}, \eqref{Est.T}, \eqref{AJPP.1}, 
using H\uml older's inequality, it follows
\begin{align*}
\|\rho S^{1/\gamma}\nabla T\|_{L^{\frac{2\gamma}{2+\gamma}}(Q_\tf)} \leq 
\|\rho S^{1/\gamma}\|_{L^\gamma(Q_\tf)}\|\nabla T\|_{L^2(Q_\tf)}\leq C,\\
\left\|
 T^{1/2}\sqrt\frac{\lambda(S,T)}{T} \nabla p
\right\|_{L^{\frac{2\beta + 4/3}{\beta + 5/3}}(Q_\tf)}\leq
\left\|T^{1/2}\right\|_{L^{2\beta + 4/3}(Q_\tf)}\left\|
\sqrt\frac{\lambda(S,T)}{T} \nabla p
\right\|_{L^{2}(Q_\tf)}\leq C,
\end{align*}
so we deduce
\begin{align*}
\|\sqrt{k_r(S)}\nabla\rho^\gamma\|_{L^{a_2}(Q_\tf)}\leq C,\quad
a_2\equiv\min\left\{\frac{2\gamma}{2+\gamma},
\frac{2\beta + 4/3}{\beta + 5/3}  \right\} > 1.
\end{align*}
In particular we deduce
\begin{align}\label{est.narho.1}
\|\sqrt{k_r(S)}\nabla G(\rho^\gamma)]\|_{L^{a_2}(Q_\tf)}\leq C[G],
\end{align}
where $G\in W^{1,\infty}(\R_+)$ is arbitrary, and the constant $C[G]>0$ depends on $G$.

On the other hand, since $1/\gamma + 1/q < 1$,
from \eqref{hp.krf}, \eqref{Est.S} it follows
\begin{align*}
\|G(\rho^\gamma)\nabla \sqrt{k_r(S)}\|_{L^{\infty}(0,\tf; L^q(\Omega))} &\leq c_f' \| G(\rho^\gamma) \|_{L^\infty(Q_\tf)}\|\nabla f(S)\|_{L^\infty(0,\tf; L^q(\Omega))}\leq C[G],
\end{align*}
which, together with \eqref{est.narho.1}, leads to \eqref{Est.narho}.

Let us now find another estimate related to $\rho_i/\rho$, $i=1,\ldots,N$. 
From \eqref{Proj_1} and \eqref{Est.B.2} we get
\begin{align*}
\int_0^\tf\int_\Omega\left( \log\frac{\rho_i}{\rho} - \frac{1}{N}\sum_{j=1}^N\log\frac{\rho_j}{\rho} \right)^2 dx dt =
\int_0^\tf\int_\Omega\left( \log\rho_i - \frac{1}{N}\sum_{j=1}^N\log\rho_j \right)^2 dx dt\leq C,
\end{align*}
for $i=1,\ldots,N$. Lemma \ref{lem.aux.1} given below allows us to deduce
\begin{align*}
\left\| \sum_{j=1}^N\log\frac{\rho_j}{\rho} \right\|_{L^2(Q_\tf)}\leq C,
\end{align*}
which, together with the previous estimate, leads to \eqref{est.relchp}.

We conclude by stating a simple algebraic property used in the proof of Prop.\ref{prop.7}.
\begin{lemma}\label{lem.aux.1}
For every $\eps>0$ there exists $C_\eps>0$ such that
\begin{align*}
\sum_{i=1}^N\left(
\log u_i - \frac{1}{N}\sum_{j=1}^N\log u_j\right)^2
\geq C_\eps\left(\sum_{i=1}^N\log u_i \right)^2 - \eps	
\end{align*}
for every $\vec{u}\in (0,\infty)^N$, such that $\sum_{i=1}^N u_i = 1$.	
\end{lemma}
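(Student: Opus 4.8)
The idea is to pass to logarithmic variables and read the inequality as a statement about the orthogonal splitting $\R^N=\sspan\{\veb 1\}\oplus\sspan\{\veb 1\}^\perp$: the left-hand side is the squared transverse component of $\vec v=(\log u_1,\dots,\log u_N)$, and $\big(\sum_i\log u_i\big)^2$ is $N^2$ times the squared longitudinal component, so the claim is that the transverse part controls the longitudinal part up to an additive $\eps$. May assume $N\geq 2$, the case $N=1$ being trivial since then $u_1=1$.

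First I would set $v_i=\log u_i$ and $\bar v=\tfrac1N\sum_{j=1}^N v_j$, so that the left-hand side equals $L^2:=\sum_{i=1}^N(v_i-\bar v)^2=|\Pi^N\vec v|^2$ while $\big(\sum_{i=1}^N\log u_i\big)^2=N^2\bar v^2$. Next I would use the elementary bound $|v_i-\bar v|\leq L$ for each $i$ (one term of the sum defining $L^2$), so $e^{v_i}\leq e^{\bar v+L}$; summing and using the constraint $\sum_j e^{v_j}=1$ gives $N e^{\bar v+L}\geq 1$, i.e. $\bar v\geq -L-\log N$. Since each $u_i<1$ (as $\sum u_j=1$, $u_j>0$, $N\geq 2$), we also have $v_i<0$, hence $\bar v<0$ and therefore
\[
\Big|\sum_{i=1}^N\log u_i\Big|=-N\bar v\leq N(L+\log N),
\qquad\text{so}\qquad
\Big(\sum_{i=1}^N\log u_i\Big)^2\leq N^2(L+\log N)^2 .
\]

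It then remains to observe that, for fixed $\eps>0$, the function $g(L)=\dfrac{L^2+\eps}{N^2(L+\log N)^2}$ is continuous and strictly positive on $[0,\infty)$ — note $g(0)=\eps/(N^2(\log N)^2)>0$ — and $g(L)\to N^{-2}>0$ as $L\to\infty$; hence $C_\eps:=\inf_{L\geq 0}g(L)>0$ (a continuous positive function on $[0,\infty)$ with a positive limit at infinity is bounded below by a positive constant). Combining with the previous display, $\big(\sum_i\log u_i\big)^2\leq N^2(L+\log N)^2\leq C_\eps^{-1}(L^2+\eps)$, which rearranges to the asserted inequality with this $C_\eps$. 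This is not a hard argument; the only point deserving attention is that the additive $\eps$ is genuinely needed — without it $g(0)=0$ and no uniform positive constant exists — which reflects the degeneracy occurring as $\vec u$ approaches the uniform distribution, where the left-hand side vanishes but $\big(\sum_i\log u_i\big)^2=N^2(\log N)^2\neq 0$.
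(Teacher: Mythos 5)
Your proof is correct, and it takes a genuinely different route from the paper's. The paper argues by contradiction: assuming the inequality fails along a sequence $\vec u^{(n)}$ with constants $1/n$, it normalizes $v_i^{(n)}=\log u_i^{(n)}/\sum_k\log u_k^{(n)}$, extracts a convergent subsequence from the resulting bounded sequence on the simplex, and derives a contradiction with the constraint $\sum_i u_i^{(n)}=1$. Your argument is direct and quantitative: the simplex constraint together with $|v_i-\bar v|\le L$ yields the a priori bound $|\bar v|\le L+\log N$, which reduces the claim to the one-variable minimization $C_\eps=\inf_{L\ge 0}(L^2+\eps)/(N^2(L+\log N)^2)>0$. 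What each buys: the paper's soft compactness argument is short to write but gives no information about the size of $C_\eps$; your approach is only marginally longer, is entirely elementary, and produces a semi-explicit constant, which would matter if one ever needed to track how $C_\eps$ degenerates as $\eps\to 0$ (one reads off $C_\eps\sim\eps/(N\log N)^2$ for small $\eps$, and $C_\eps\to N^{-2}$ as $\eps\to\infty$). Both proofs correctly isolate the same obstruction — at the uniform point $u_i\equiv 1/N$ the left-hand side vanishes while $(\sum_i\log u_i)^2=N^2(\log N)^2$, so the additive $-\eps$ is genuinely needed — and both use the constraint $\sum_i u_i=1$ in an essential way.
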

\begin{proof}
By contradiction, assume there exists $\eps_0>0$ such that, for every
$n\in\N$, there is $u^{(n)}\in (0,\infty)^N$ such that
\begin{align*}
\sum_{i=1}^N\left(
\log u_i^{(n)} - \frac{1}{N}\sum_{j=1}^N\log u_j^{(n)}\right)^2
< \frac{1}{n}\left(\sum_{i=1}^N\log u_i^{(n)} \right)^2 - \eps_0,\quad
\sum_{i=1}^N u_i^{(n)} = 1.
\end{align*}
As a consequence $\left(\sum_{i=1}^N\log u_i^{(n)} \right)^2>0$, so
we can define 
$$
v_i^{(n)} = \frac{\log u_i^{(n)}}{ \sum_{k=1}^N\log u_k^{(n)} },\quad
i=1,\ldots,N,
$$
and it follows
\begin{align}\label{aux.1}
\sum_{i=1}^N\left(v_i^{(n)} - \frac{1}{N}\right)^2
< \frac{1}{n} - \frac{\eps_0}{\left(\sum_{i=1}^N\log u_i^{(n)} \right)^2} .
\end{align}
We point out that clearly $u_i^{(n)}\leq 1$ for $i=1,\ldots,N$, which implies that $v_i^{(n)}\geq 0$ for $i=1,\ldots,N$. Furthermore $\sum_{i=1}^N v_i^{(n)} = 1$ by construction, so the sequence $v^{(n)}$ is bounded. Therefore there exists a subsequence (not relabeled) of $v^{(n)}$ that is convergent: $v^{(n)}\to v$ as $n\to\infty$.
Taking the limit $n\to\infty$ in \eqref{aux.1} yields
\begin{align*}
\lim_{n\to\infty}v_i^{(n)} = \frac{1}{N}\quad i=1,\ldots,N,
\quad
\lim_{n\to\infty}\left|\sum_{k=1}^N\log u_k^{(n)} \right| = \infty .
\end{align*}
As a consequence,
\begin{align*}
-\log u_i^{(n)} = v_i^{(n)}\left|\sum_{k=1}^N\log u_k^{(n)} \right|
\to \infty\quad\mbox{as }n\to\infty,~~ i=1,\ldots,N.
\end{align*}
This means that $u_i^{(n)}\to 0$ as $n\to\infty$ for $i=1,\ldots,N$,
which is in contradiction with the fact that $\sum_{i=1}^N u_i^{(n)}=1$.
This finishes the proof of the Lemma.	
\end{proof}

%-------------------------------------------------------------%


\begin{thebibliography}{}

\bibitem{AJPP25} {\sc B. Amaziane, M. Jurak, L. Pankratov and A. Piatnitski}. Existence of weak solutions for nonisothermal immiscible compressible two-phase flow in porous media. {\em Nonlinear Anal. RWA} 85 (2025) 104364.
%
\bibitem{Bear68} {\sc J.~Bear}.
Dynamics of Fluids in Porous Media, {\em American Elsevier}, 1972. Reissued by {\em Dover publication}, 1988.
%
\bibitem{BePaz17} {\sc M.~Bene\v s, I. Pa\v zanin}. Homogenization of degenerate coupled fluid flows and heat transport through porous media. {\em J. Math. Anal. Appl.} 446 (2017), 165--192. 
%
\bibitem{BoMo88.1} {\sc O.B.~Bocharov, V.N. Monakhov}. Boundary value problems of nonisothermic two-phase filtration in porous media. {\em Dinamika Sploshn Sredy} 86 (1988), 47--59. 
%
\bibitem{BoMo88.2} {\sc O.B.~Bocharov, V.N. Monakhov}. Nonisothermal filtration of immiscible fluids with variable residual saturation. {\em Dinamika Sploshn Sredy} 88 (1988), 3--12. 
%
\bibitem{BoDre15} {\sc D. Bothe and W. Dreyer}. Continuum thermodynamics of chemically reacting fluid mixtures. Acta Mech., 226 (2015) 1757-1805.
%
\bibitem{BD20} {\sc D. Bothe and P.-E. Druet}. On the structure of continuum thermodynamical diffusion fluxes: a novel closure scheme and its relation to the Maxwell-Stefan and the Fick-Onsager approach. 
International Journal of Engineering Science, 184
(2023),  103818.
%
\bibitem{BH15} {\sc M.~Buli\v{c}ek, J.~Havrda}. On existence of weak solution to a model describing incompressible mixtures with thermal diffusion cross effects. {\em Z. Angew. Math. Mech.} 95 (2015), 589--619.
 %
\bibitem{BJPZ20} {\sc M.~Buli\v{c}ek, A.~J\"{u}ngel, M. Pokorn\'y, N.~Zamponi}. Existence analysis of a stationary compressible fluid model for heat-conducting and chemically reacting mixtures. {\em J. Math. Phys.} 63, 051501 (2022). 
%
\bibitem{BPZ17} {\sc M.~Buli\v{c}ek, M. Pokorn\'y, N.~Zamponi}.
Existence analysis for incompressible fluid model of electrically charged chemically reacting and heat conducting mixtures. {\em SIAM J. Math. Anal.} 49 (2017), no. 5, 3776--3830. 
%
\bibitem{BSW12} {\sc M.~Burger, B.~Schlake, M.-T.~Wolfram}. Nonlinear Poisson-Nernst-Planck equations for ion flux through confined geometries. {\em Nonlinearity} 25 (2012), no. 4, 961--990.
%
\bibitem{CaoPop16}
{\sc X.~Cao, I.S.~Pop}. Degenerate two-phase porous media flow model with dynamic capillarity. 
{\em J. Differential Equations} 260 (2016), 2418--2456.
%
\bibitem{Coussy04} {\sc O. Coussy}.
Poromechanics, {\em Wiley} 2004.
%
\bibitem{DMZ20} {\sc E.~S.~Daus, J.-P.~Mili\v si\'c, N. Zamponi}. Global existence for a two-phase flow model with cross diffusion. {\em DCDS-B} 25.3 (2020), 957--979.
%
\bibitem{DrGisKr14} {\sc W. Dreyer, J. Giesselmann, C. Kraus}.
A compressible mixture model with phase transition,
{\em Physica D: Nonlinear Phenomena}
Vol. 273–274 (2014), 1--13.
%
\bibitem{DDGG20} {\sc W.~Dreyer, P.-E.~Druet, P.~Gajewski, C.~Guhlke}. Analysis of improved Nernst-Planck-Poisson models of compressible isothermal electrolytes. {\em Z. Angew. Math. Phys.} 71, 119 (2020).  
% 
\bibitem{Feir04} {\sc E. Feireisl}. Dynamics of Viscous Compressible Fluids. {\em Oxford University Press}, Oxford, 2004.
%
\bibitem{FeirNov09} {\sc E.~Feireisl, A.~Novotn\'y}.
Singular limits in Thermodynamics of Viscous Fluids. {\em Birkh\"{a}user}, 2009.
%
\bibitem{FHKM20} {\sc J.~Fischer, K.~Hopf, M.~Kniely, A.~Mielke}. Global existence analysis of energy-reaction-diffusion systems. Submitted for publication (arXiv:2012.03792).
%
\bibitem{Gio99} {\sc  V.~Giovangigli}. Multicomponent Flow Modeling. {\em Birkh\"{a}user}, Boston, 1999.
%
\bibitem{GrootMazur} {\sc S.R.~ de Groot, P. Mazur}.
Non-equilibrium thermodynamics. {\em Dover publications}, 1984.
%
\bibitem{HassGrey93} {\sc S.M. Hassanizadeh, W.G. Gray}. Thermodynamic basis of capillary-pressure in porous-media, {\em Water Resour. Res.} 29.10 (1993), 3389--3405.
%
\bibitem{HJ21} {\sc C.~Helmer, A.~J\"{u}ngel.}
Analysis of Maxwell-Stefan systems for heat conducting fluid mixtures. 
{\em Nonlinear Anal. Real World Appl.} 59 (2021), 103263. 
%
\bibitem{JKKP19} {\sc M. Jurak, A. Koldoba, A. Konyukhov and L. Pankratov}.
Nonisothermal immiscible compressible thermodynamically consistent two-phase flow in porous media. {\em C. R. Mecanique} 347 (2019), 920--929. 
%
\bibitem{Jue15} {\sc A.~J\"ungel}. The boundedness-by-entropy method for cross-diffusion systems. {\em Nonlinearity} 28 (2015), 1963--2001.
%
\bibitem{Jue16} {\sc A.~J\"ungel}. Entropy Methods for Diffusive Partial
Differential Equations. {\em Springer Briefs in Mathematics}, Springer, 2016.
%
\bibitem{JMZ18} {\sc A.~J\"ungel, J.~Miky\v ska, N.~Zamponi}.
Existence analysis of a single-phase flow mixture model with van der Waals pressure. {\em SIAM J. Math. Anal.} 50.1 (2018), 1367-1395.
%
\bibitem{Kav95} {\sc M. Kaviany}. Priciples of Heat Transfer in Porous Media. {\em Springer}, New York, 1995.
%
\bibitem{LiSun10} {\sc B.~Li, W. Sun}. Global existence of weak solution for nonisothermal multicomponent flow in porous textile media. {\em SIAM J. Math. Anal.} 42 (2010), 3076--3102.
%
\bibitem{Murat78} {\sc F. Murat}. Compacit\'e par compensation. {\em Ann. Sc. Norm. Sup. Pisa}, Cl. Sci. Ser. 5, IV (1978), 489--507.
%
\bibitem{NovPok11} {\sc A.~Novotn\'y, M. Pokorn\'y}. Steady compressible Navier-Stokes-Fourier system for monoatomic gas and its generalizations. {\em J.~Diff.~Eqs.~} 251 (2011), 270--315.
%
\bibitem{Simon87} {\sc J. Simon}. Compact sets in the space  $L^p (0,T;B)$.
{\em Annali di Matematica Pura ed Applicata}, 146 (1987), 65--96.
%
\bibitem{SDQ11} {\sc C. Soulaine, G. Debenest, M. Quintard}.
Upscaling multi-component two-phase flow in porous media with partitioning coefficient, {\em Chemical Engineering Science}. 66 (23) (2011), 6180-6192.
%
\bibitem{OstrRho20} {\sc L. Ostrowski, C. Rohde}. Compressible multicomponent flow in porous media with Maxwell-Stefan diffusion. {\em Math Meth Appl Sci.} 43 (2020), 4200–4221.%
%
\bibitem{Tartar75} {\sc L. Tartar}. Compensated compactness and applications to partial differential equations. {\em Nonlinear Anal. and Mech.}, Heriot-Watt Sympos., L.J. Knopps editor, Research Notes in Math 39, Pitman, Boston, 136--211, 1975.
%
\bibitem{EZ15} {\sc E. Zatorska}. Mixtures: Sequential Stability of Variational Entropy Solutions. {\em J. Math. Fluid Mech.} 17 (2015) 437–461. \end{thebibliography}
\end{document}